\documentclass[a4paper,11pt]{amsart}

\usepackage{amsmath}
\usepackage{amssymb}
\usepackage{amsthm}
\usepackage{verbatim}
\usepackage[all]{xy}
\usepackage{enumerate}
\usepackage{color}

\newtheorem{thm}{Theorem}[section]

\newtheorem{lem}[thm]{Lemma}

\theoremstyle{definition}

\newtheorem{rmk}[thm]{Remark}

\numberwithin{equation}{section}
\textwidth 6.2truein
\textheight 8.5truein
\oddsidemargin 0pt
\evensidemargin 0pt
\pagestyle{myheadings}
\overfullrule=5pt

\title{Weak type estimates for the absolute value mapping}

\author{M. Caspers, D. Potapov, F. Sukochev, D. Zanin}
\date{\today, {\it MSC2000}: 47B10, 47L20, 47A30,\\ {\it Acknowledgement:} The first author is supported by the grant SFB 878}

\address{M. Caspers, Fachbereich Mathematik und Informatik der Universit\"at M\"unster,
Einsteinstrasse 62,
48149 M\"unster, Germany}
\email{martijn.caspers@uni-muenster.de}
\address{D. Potapov, F. Sukochev, D. Zanin, School of Mathematics and Statistics, UNSW, Kensington 2052, NSW, Australia}
\email{d.potapov@unsw.edu.au}
 \email{f.sukochev@unsw.edu.au}
\email{d.zanin@unsw.edu.au}

\begin{document}

\maketitle

\begin{abstract} We prove that if $A$ and $B$ are bounded self-adjoint operators such that $A -B$ belongs to the trace class, then
$|A| -|B|$ belongs to the principal ideal $\mathcal{L}_{1,\infty}$ in the algebra $\mathcal{L}(H)$ of all
 bounded operators on an infinite-dimensional Hilbert space generated by an operator whose sequence of eigenvalues is
$\{1, \frac12,\frac13,\dots\}$. Moreover, $\mu(j;|A| -|B|)\leq const(1 + j )^{-1}\|A-B\|_1$. We also obtain a semifinite version of this result, as well as the corresponding commutator estimates.
\end{abstract}

\bibliographystyle{plain}

\parindent=0.0pt

\section{Introduction}
Let $H$ be a complex separable Hilbert space, let $\mathcal{K}(H)$ be the $*-$algebra of all compact operators on $H$ and let $\mathcal{L}_p$, $1\leq p<\infty,$ be the $p$-th Schatten-von Neumann class (that is the class of all  operators $A$ from $\mathcal{K}(H)$ such that $\|A\|_p:=\left(\sum _{k=0}^\infty \mu(k; A)^p\right)^{1/p}<\infty$, where $\{\mu(k;A)\}_{k=0}^\infty$ is the sequence of singular numbers of the operator $A$ \cite{GK1,LSZ}). The following result was proved by E.~B. Davies \cite[Theorem 8]{Davies} (for its extension to semifinite von Neumann algebras, we refer to \cite{DDPS1}).
\begin{thm}\label{Daviesthm} If $A,B$ are self-adjoint bounded operators on $H$ and if $A-B\in\mathcal{L}_p,$ $1<p<\infty$, then $|A|-|B|\in\mathcal{L}_p$ and
$$\||A|-|B|\|_p\leq c_p\|A-B\|_p.$$
Here, $c_p$ depends only on $p$ and $c_p=O(p)$ as $p\to\infty$ and $c_p=O((p-1)^{-1})$ as $p\to 1$.
\end{thm}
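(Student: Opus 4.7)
The plan is to reduce the estimate to a problem about Schur multipliers on $\mathcal{L}_p$. Starting from $|A|=(A^2)^{1/2}$ and the formula
$$\sqrt{x}=\frac{2}{\pi}\int_0^\infty\frac{x}{x+s^2}\,ds,\qquad x\geq 0,$$
I would write
$$|A|-|B|=\frac{2}{\pi}\int_0^\infty s^2\,(A^2+s^2)^{-1}(A^2-B^2)(B^2+s^2)^{-1}\,ds,$$
and expand $A^2-B^2=A(A-B)+(A-B)B$. A naive norm estimate of the two resulting integrals produces a logarithmic divergence at $s\to\infty$; this is the symptom that forces one to go beyond resolvent calculus and work with a genuine double operator integral.

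The cleaner reformulation is to write $|A|-|B|=T^{A,B}_{\phi}(A-B)$, where $T^{A,B}_\phi$ is the double operator integral associated with the spectral measures of $A$ and $B$ and the divided difference
$$\phi(x,y)=\frac{|x|-|y|}{x-y}.$$
I would therefore reduce the theorem to showing that $\phi$ is an $\mathcal{L}_p$-Schur multiplier with norm bounded by $c_p$ having the stated behaviour. By a standard approximation/transference argument one may replace $T^{A,B}_\phi$ by the corresponding Schur multiplier on $B(\ell_2)$ acting entrywise on matrix units indexed by the eigenvalues of $A$ and $B$.

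Next I would split the multiplier along the sign pattern. On the region $\{xy\geq 0\}$ one has $\phi(x,y)=\mathrm{sgn}(x)$, which gives a block-diagonal Schur multiplier whose $\mathcal{L}_p$-norm is at most one (its action is $X\mapsto E_+(A)X-E_-(A)X+E_+(A)X-E_-(A)X$ up to a permutation, hence a contraction). On the region $\{xy<0\}$ one has $\phi(x,y)=(|x|+|y|)/(x-y)$; changing variables $u=\log|x|$, $v=\log|y|$ turns this into the kernel $\coth\!\tfrac{u-v}{2}$, which is the sum of a bounded term and the kernel of the Hilbert transform. Boundedness of the associated Schur multiplier on $\mathcal{L}_p$ then follows from the $\mathcal{L}_p$-boundedness of the triangular truncation/Hilbert transform for Schatten classes with norm $O(p)$ as $p\to\infty$ and $O((p-1)^{-1})$ as $p\to 1$, which is the non-commutative UMD theorem of Bourgain--McConnell--Pisier.

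The main obstacle, and the step that contains essentially all the depth of the theorem, is precisely this last reduction: extracting the Hilbert-transform piece from the off-diagonal kernel and invoking the sharp Schatten-class norm of that operator. Everything else (the integral formula, the resolvent identity, the splitting $A^2-B^2=A(A-B)+(A-B)B$, the region decomposition) is essentially algebraic bookkeeping, but the quantitative $c_p=O(p)+O((p-1)^{-1})$ behaviour reflects the UMD constants of $\mathcal{L}_p$ and cannot be improved by a softer argument; in particular the endpoints $p=1$ and $p=\infty$ are known to fail, which is the motivation for the weak-type $\mathcal{L}_{1,\infty}$ result announced in the abstract.
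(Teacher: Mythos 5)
You are proving a result that the paper itself does \emph{not} prove: Theorem \ref{Daviesthm} is quoted from Davies \cite[Theorem 8]{Davies}, and the only ``proof'' the paper offers is the sketch in Remark \ref{Rmk=WeakInterpolation}, which deduces it from the paper's new weak-type $(1,1)$ commutator bound (Theorem \ref{Thm=WkCommutatorEstimate}) by Boyd/weak-type interpolation between $\mathcal{L}_{1,\infty}$ and $\mathcal{L}_2$, duality for $2<p<\infty$, and the transfer between commutator and Lipschitz estimates from \cite[Theorem 2.2]{DDPS1}. Your argument, by contrast, is a direct double-operator-integral/Schur-multiplier proof in the spirit of Davies's original one, reducing matters to the $\mathcal{L}_p$-boundedness of triangular truncation. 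These are genuinely different routes; what the paper's route buys is precisely the endpoint information ($p=1$ weak type, which a direct Schur-multiplier estimate cannot see), while your route gives a self-contained proof that does not presuppose the paper's main theorem.

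Within your route there are two algebraic slips that need correcting, although neither is fatal to the strategy. First, on the off-diagonal region $\{xy<0\}$ the divided difference is
$$
\phi(x,y)=\frac{|x|-|y|}{x-y}=\frac{|x|-|y|}{|x|+|y|},
$$
not $\dfrac{|x|+|y|}{x-y}$ (which would be identically $\pm1$). Writing $|x|=e^{u}$, $|y|=e^{v}$ then gives $\phi=\tanh\frac{u-v}{2}$, \emph{not} $\coth\frac{u-v}{2}$. This matters: $\coth$ has a nonintegrable singularity at $u=v$ and the corresponding Schur multiplier would not be bounded on any $\mathcal{L}_p$; $\tanh$, on the other hand, is bounded and $\tanh-\operatorname{sgn}\in L^1(\mathbb{R})$, so the multiplier splits as ``triangular truncation $+$ an absolutely convergent (hence completely bounded, $p$-uniform) piece,'' which is exactly what yields $c_p=O(p)+O((p-1)^{-1})$ from the Gohberg--Krein/Macaev triangular-truncation estimate (the same fact the paper uses in Lemma \ref{key lemma} through the factorization $S=(2T-1)(2M_\Phi-1)$). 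Second, the display you give for the diagonal piece, $X\mapsto E_+(A)X-E_-(A)X+E_+(A)X-E_-(A)X$, is garbled; what you mean is $X\mapsto E_+(A)XE_+(B)-E_-(A)XE_-(B)$, which is indeed a contraction on every $\mathcal{L}_p$ since the two summands have mutually orthogonal left and right supports. With these corrections your sketch is sound; you should also note that the attribution of the triangular-truncation bound to Bourgain--McConnell--Pisier is anachronistic (the Schatten-class estimate goes back to Macaev and Gohberg--Krein, with McConnell's UMD theorem giving an independent route), and that some care is needed to define the double operator integral $T^{A,B}_\phi$ for bounded non-compact $A,B$ before passing to matricial Schur multipliers -- the reduction to finite rank as in the proof of Theorem \ref{abs lip} via \cite[Corollary 1.5]{DDPS1} handles this cleanly.
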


For various extensions and generalizations of Theorem \ref{Daviesthm}, we refer to the papers \cite{Kos}, \cite{BiS}, \cite{DDPS1}, \cite{DDPS2}, \cite{NP} studying the Lipschitz continuity of the absolute value mapping $A\to |A|$ in the setting of symmetrically-normed ideals (and more general symmetric operator spaces). Here, we contribute to an interesting open question concerning the optimal form of Theorem \ref{Daviesthm} in the crucial case $p=1$. It is well known (see \cite[Section 3]{Davies}) that the absolute value mapping is not Lipschitz continuous in the
trace class $({\mathcal L}_1,\|\cdot\|_1)$.  It was proved by H. Kosaki \cite[Theorem 12]{Kos} (see also \cite[Corollary 3.4]{DDPS2}) that the absolute value mapping is Lipschitz continuous from
$({\mathcal L}_1,\|\cdot\|_1)$ into Banach ideal $({\mathcal M}_{1,\infty},\|\cdot\|_{{\mathcal M}_{1,\infty}})$, where
$$
{\mathcal M}_{1,\infty}:=\{A\in \mathcal{K}(H): \|A\|_{{\mathcal M}_{1,\infty}}:=\sup_{N\ge 0} \frac{1}{\log(N+2)}\sum _{k=0}^N \mu(k;A)<\infty\}.
$$
The main objective of this paper is to show that the latter result holds if we replace  $({\mathcal M}_{1,\infty},\|\cdot\|_{{\mathcal M}_{1,\infty}})$ with a smaller (quasi-Banach) ideal $({\mathcal L}_{1,\infty},\|\cdot\|_{1,\infty})$, where
\begin{equation}\label{weakL_1}
{\mathcal L}_{1,\infty}:=\{A\in \mathcal{K}(H): \|A\|_{{\mathcal L}_{1,\infty}}:=\sup_{k\ge 0} (k+1) \mu(k;A)<\infty\}.
\end{equation}

\begin{thm}\label{abs lip} If $A,B$ are self-adjoint bounded operators on $H$ and if $A-B\in\mathcal{L}_1$, then $|A|-|B|\in\mathcal{L}_{1,\infty}$ and
\begin{equation}\label{super}\||A|-|B|\|_{1,\infty}\leq \Big(34+\frac{2560e}{\pi}\Big)\|A-B\|_1.
\end{equation}

\end{thm}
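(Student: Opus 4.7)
My plan starts from the algebraic identity
\begin{equation*}
|A|-|B| \;=\; \operatorname{sgn}(A)(A-B) + \bigl(\operatorname{sgn}(A)-\operatorname{sgn}(B)\bigr)B.
\end{equation*}
The first term already lies in $\mathcal{L}_1\subset\mathcal{L}_{1,\infty}$ with $\|\operatorname{sgn}(A)(A-B)\|_{1,\infty}\leq\|A-B\|_1$, so the whole task reduces to bounding the second term $(\operatorname{sgn}(A)-\operatorname{sgn}(B))B$ in $\mathcal{L}_{1,\infty}$ by a constant multiple of $\|A-B\|_1$.

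For the sign difference I would invoke the scalar integral representation $\operatorname{sgn}(x)=\tfrac{2}{\pi}\int_0^\infty \tfrac{x\,dt}{x^2+t^2}$, which, applied via the functional calculus, gives
\begin{equation*}
\operatorname{sgn}(A)-\operatorname{sgn}(B) \;=\; \frac{2}{\pi}\int_0^\infty\bigl[A(A^2+t^2)^{-1}-B(B^2+t^2)^{-1}\bigr]\,dt.
\end{equation*}
The second resolvent identity together with $A^2-B^2 = A(A-B)+(A-B)B$ rewrites the integrand as a sum of terms of the form $R(t)(A-B)S(t)$, where $R(t),S(t)$ are bounded functions of $A$ or $B$ of operator norm at most $1$, with additional $O(t^{-1})$ decay for $t\geq\max(\|A\|,\|B\|)$. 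Multiplying on the right by $B$ and integrating reduces everything to weighted $(A-B)$-integrals whose tail in $t\geq s$ gives a direct trace-class bound of order $s^{-1}\|A-B\|_1$.

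The principal obstacle is upgrading the resulting $\mathcal{M}_{1,\infty}$-bound—essentially Kosaki's estimate—to the sharper $\mathcal{L}_{1,\infty}$-bound claimed here, i.e.\ a pointwise singular-value decay $\mu(N;|A|-|B|)\leq C(N+1)^{-1}\|A-B\|_1$ rather than its averaged, logarithmic counterpart. I would attack this by choosing the threshold $s=s(N)$ to depend on the target index $N$, dyadically decomposing $A-B=X_N+Y_N$ with $\operatorname{rank}(X_N)\lesssim N$ and $\|Y_N\|_p$ small for $p$ close to $1$, and applying Theorem~\ref{Daviesthm} to the $Y_N$-piece. The hard step is that the naive choice $p=1+1/\log N$ in such an interpolation introduces an extraneous $\log N$; removing it will require exploiting genuine cancellations in $R(t)(A-B)S(t)$—most plausibly using that, for $t$ small relative to the relevant spectral scale, $R(t)$ and $S(t)$ are close to a common spectral projection of $A$ and $B$—so that the commutator-type structure absorbs the logarithm. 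Optimizing the scale $s$ and the Schatten index $p$ together is what would ultimately pin down the explicit constant $34+\tfrac{2560e}{\pi}$ in \eqref{super}.
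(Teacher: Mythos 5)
Your opening identity $|A|-|B| = \operatorname{sgn}(A)(A-B) + (\operatorname{sgn}(A)-\operatorname{sgn}(B))B$ is correct, and the integral representation of $\operatorname{sgn}(A)-\operatorname{sgn}(B)$ via the resolvent is precisely the route of Davies and Kosaki. But, as you acknowledge, that route terminates at an $\mathcal{M}_{1,\infty}$ estimate, and the obstruction is structural rather than technical: $\|\cdot\|_{1,\infty}$ is a quasi-norm with no convexity, so after integrating the family $t\mapsto R(t)(A-B)S(t)$ over $t$ one cannot retain a weak-$L_1$ bound — only its convexified, logarithmic version survives. The ``upgrade'' you propose (split $A-B=X_N+Y_N$ with $\operatorname{rank}(X_N)\lesssim N$, invoke Theorem~\ref{Daviesthm} at $p=1+1/\log N$ on $Y_N$) inherits the factor $c_p\sim (p-1)^{-1}\sim\log N$ that is the whole point of the problem. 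Your final sentence — that removing it ``will require exploiting genuine cancellations'' near a common spectral projection — is a statement of the difficulty, not an argument. As written, the proposal proves Kosaki's $\mathcal{M}_{1,\infty}$ result and stops there; the decisive step is missing.

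The paper avoids the integration altogether. After reducing to finite, symmetrically and identically distributed matrices, it expresses $|A|-|B|$ in terms of compressions of $A-B$ together with a double operator integral
\begin{equation*}
S(X)\;=\;\sum_{k,l}\frac{\alpha_k-\alpha_l}{\alpha_k+\alpha_l}\,p_k X p_l ,
\end{equation*}
with $\alpha_k=\mu(k,A_+)$ decreasing. The key observation, which is the idea your argument lacks, is the algebraic factorization $S=(2T-1)(2M_\Phi-1)$, where $T$ is triangular truncation and $M_\Phi$ is the Schur multiplier with matrix $\Phi_{k,l}=\alpha_{\max(k,l)}/(\alpha_k+\alpha_l)$. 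By a lemma of Bhatia (a telescoping decomposition into corners of a Cauchy matrix) $\Phi$ is positive semidefinite with constant diagonal $1/2$, so Schur's theorem gives $\|M_\Phi\|_{\mathcal{L}_1\to\mathcal{L}_1}\leq 2$; and $2T-1$ maps $\mathcal{L}_1$ into $\mathcal{L}_{1,\infty}$ with the Macaev–Gohberg–Krein constant $4e/\pi$. Composing two bounded maps is a single algebraic step that lands directly in $\mathcal{L}_{1,\infty}$ with an explicit constant, which, after the reductions of Lemmas~\ref{prefinal lemma} and~\ref{abs lip fin} and a limiting argument, yields \eqref{super}. In short: where you integrate in $t$ and then try to repair the loss, the paper factors the double operator integral into a positive Schur multiplier times a triangular truncation, and no repair is needed.
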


The strength of Theorem \ref{abs lip} is seen from the fact that it implies the result of Theorem \ref{Daviesthm} via a combination of methods used in \cite{DDPS1}, \cite{Davies}
linking Lipschitz continuity and commutator estimates with a noncommutative version of the Boyd interpolation theorem
(see e.g. \cite[Theorem 5.8]{Dirksen}).   We refer to Remark \ref{Rmk=WeakInterpolation} for more details.   Such an implication is of course not available from the results of
\cite[Theorem 12]{Kos} and \cite[Corollary 3.4]{DDPS2}. The result of Theorem \ref{abs lip} is also sharp in the sense that the quasi-norm
$\|\cdot\|_{1,\infty}$ is the largest symmetric quasi-norm on the ideal of finite rank operators for which \eqref{super} holds
(the latter follows from the proof of \cite[Lemma 10]{Davies}).

From a certain perspective, the result of Theorem \ref{abs lip} is not unexpected. Indeed, the proof of Theorem \ref{Daviesthm} in \cite {Davies},
as well as the proofs of its analogues and extensions from \cite{Kos}, \cite{BiS}, \cite{DDPS1}, \cite{DDPS2} are ultimately based on the famous
results due to V.I. Macaev, I. C. Gohberg and M. G. Krein (see \cite{GK1}, \cite{Davidson}), describing the behavior of (generalized) triangular
truncation operators in Schatten-von Neumann classes $\mathcal{L}_p$. In the case when $p=1$, these results yield the fact that the latter operator
acts boundedly from the Banach space $(\mathcal{L}_1,\|\cdot\|_1)$ into a quasi-Banach space $(\mathcal{L}_{1,\infty},\|\cdot\|_{1,\infty})$.
However, (and here lies the major difficulty) all the proofs in the just listed papers involve certain integration processes, which render them
inapplicable in the quasi-normed setting. Exactly the same obstacle also manifested itself in \cite[Theorem 2.5(i)]{NP}. Indeed, that theorem
yields the result of Theorem \ref{abs lip} under the restrictive assumption that $\rm{rank}(A-B)=1$ and the methods used in \cite{NP} do not
seem applicable to treat the general case. To circumvent this difficulty, we employ a completely different approach coming back to a
celebrated theorem of I. Schur concerning positive semidefiniteness of a Schur (or Hadamard) product of two semidefinite matrices.

 Section \ref{Sect=MainTheorem} contains the proof of Theorem \ref{abs lip}. In Section \ref{Sect=CompactOperators} we find a sharper result assuming the extra condition that $A$ and $B$ are compact operators. In this case it turns out that $\vert A \vert - \vert B \vert$ in fact lands in the separable part of $\mathcal{L}_{1, \infty}$, see Theorem \ref{Thm=SeperablePart}. Section \ref{Sect=SemiFinite} contains the extension of Theorem \ref{abs lip} to the setting of semifinite von Neumann algebras. This theme has been explored already in \cite{DDPS2},
however, methods employed there (again due to the obstacle explained above) were not sufficiently strong to obtain the weak type estimate similar to \eqref{super}. Furthermore, the setting used in \cite{DDPS2} was restricted to the case of semifinite factors. The approach used in this paper allows us to dispense with the latter condition.  In Section \ref{Sect=CommutatorEstimates} we treat the consequences of Theorem \ref{abs lip} for commutator estimates. In the final Section we give a treatment of the consequences of  Theorem \ref{abs lip} for certain Lipschitz functions $f$ belonging to a subclass of the  Davies class (the case $f =  \vert \: \cdot \: \vert$ being Theorem \ref{abs lip}). Note that for general Lipschitz functions $f$ outside of that subclass the question whether the weak $(1, 1)$  estimate holds remains open.

%

\subsubsection*{Acknowledgement} The authors wish to express their gratitude to Peter Dodds and the anonymous referee for several improvements of this paper.

\section{Preliminaries}

\subsection{Singular values}
Let $\mathcal{L}(H)$ be the $*-$algebra of all bounded operators on the Hilbert space $H$ equipped with a uniform norm $\|\cdot\|_{\infty}$.
Every proper ideal in $\mathcal{L}(H)$ consists of compact operators. For brevity, we set $\mu(A):= \{\mu(k,A)\}_{k\geq0}$.
If $B\in\mathcal{L}(H)$ and $A\in\mathcal{K}(H)$, then it is well known that
\begin{equation}\label{msing}
\mu(AB)\leq\|B\|_{\infty}\mu(A),\quad \mu(BA)\leq\|B\|_{\infty}\mu(A),\quad \mu(A^*)=\mu(A).
\end{equation}
If  $A,B\in\mathcal{K}(H)$, then (see e.g. Corollary 2.3.16 in \cite{LSZ}) we have
\begin{equation}\label{asing}
\mu(A+B)\leq\sigma_2(\mu(A)+\mu(B)).
\end{equation}
Here, the dilation operator $\sigma_2:l_{\infty}\to l_{\infty}$ (acting on the space $l_\infty$ of all complex bounded sequences) is defined as follows
$$\sigma_2(a_0,a_1,\cdots)=(a_0,a_0,a_1,a_1,\cdots).$$

Two self-adjoint operators $A,B\in\mathcal{K}(H)$ are called {\it identically distributed} if $\mu(A_+)=\mu(B_+)$ and $\mu(A_-)=\mu(B_-)$. Here, $A=A_+ - A_-$ is the orthogonal decomposition of a self-adjoint operator
$A$ (see e.g. \cite[p.36]{BR}). A self-adjoint operator $A\in\mathcal{K}(H)$ is called {\it symmetrically distributed} if $\mu(A_+)=\mu(A_-).$

In what follows, the symbol ${\rm supp}(A)$ stands for the support projection of a self-adjoint operator $A\in\mathcal{L}(H)$ (that is, the spectral projection of $A$ corresponding to the set $\mathbb{R}\backslash\{0\}$).

\subsection{Ideal $\mathcal{L}_{1,\infty}$}
%

Let $A_0\in\mathcal{K}(H)$ be such that $\mu(A_0)=\{1/(k+1)\}_{k\geq0}$. The principal ideal generated by $A_0$ is frequently called weak-$\mathcal{L}_1$  and coincides with $\mathcal{L}_{1,\infty}$.
The mapping $\|\cdot\|_{1,\infty}$ on $\mathcal{L}_{1,\infty}$ (see \eqref{weakL_1})
is a quasi-norm. Indeed, it follows from \eqref{asing} that
\begin{equation}\label{quasi-triangle}
\|A+B\|_{1,\infty}\leq 2\|A\|_{1,\infty}+2\|B\|_{1,\infty}.
\end{equation}
It can be shown (see e.g \cite[Theorem 2.11.32]{Pietsch}) that the quasi-normed space $(\mathcal{L}_{1,\infty},\|\cdot\|_{1,\infty})$ is complete and is, therefore, a quasi-Banach ideal. It is important to note that the quasi-norm $\|\cdot\|_{1,\infty}$ is not equivalent to any norm. In particular, the weak form of triangle inequality in \eqref{quasi-triangle} is the best possible.

The ideals $\mathcal{L}_p$ and $\mathcal{L}_{1,\infty}$ both have the Fatou property. That is, if $A_n\in\mathcal{L}_{1,\infty},$ $\|A_n\|_{1,\infty}\leq1$ and $A_n\to A$ in measure, then $A\in\mathcal{L}_{1,\infty}$ and $\|A\|_{1,\infty}\leq 1.$ Exactly the same assertion holds for $\mathcal{L}_p.$

\subsection{Schur multiplication}\label{Sect=SchurMultiplication}

Let
$$M_n=\{A=\{A_{k,l}\}_{k,l=0}^{n-1}\}$$
be the $^*-$algebra of all complex $n\times n$ matrices. The algebra $M_n$ is $^*-$isomorphic to a subalgebra $P_n\mathcal{L}(H)P_n$ in $\mathcal{L}(H),$ where $P_n$ is a projection in $\mathcal{L}(H)$ such that
${\rm Tr}(P_n)=n$, where ${\rm Tr}$ is the standard trace on $\mathcal{L}(H)$. We also frequently identify $M_n$ with $\mathcal{L}(H)$ when ${\rm dim}(H)=n$. In the latter case, all previously introduced notations  (e.g.
$\mu(A), \|\cdot\|_p, \|\cdot\|_{1,\infty}, {\rm Tr},{\rm supp}(A)$) and terminology (e.g. identically distributed) remain unambiguously defined.

For every $A,B\in M_n,$ we define their Schur product (also called Hadamard product) $A\circ B$ by setting
$$(A\circ B)_{k,l}=A_{k,l}B_{k,l},\quad 0\leq k,l\leq n-1.$$
Fix $B\in M_n.$ The Schur multiplication operator $M_B:M_n\to M_n$ is defined by setting $M_B(A)=A\circ B.$ If $B\geq 0,$ then, according to the Schur theorem, we have that $M_B(A)\geq0$ for every $A\geq0.$ For a beautiful exposition of the latter theorem and other relevant properties of Schur multiplication, we refer the reader to \cite{Bhatia}. For the next lemma, see \cite{AndoHornJohnson}. We included a short proof for completeness.

\begin{lem}\label{schur positive} If $B\geq0,$ then
$$\|M_B(A)\|_1\leq 4\|{\rm diag}(B)\|_{\infty}\cdot\|A\|_1,\quad A\in M_n.$$
\end{lem}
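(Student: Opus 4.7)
The key observation is that the inequality is essentially trivial when $A\geq 0$, and the general case reduces to this by splitting $A$ into a linear combination of four positive operators.

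\textbf{Step 1: the positive case.} Suppose first that $P\in M_n$ with $P\geq 0$. Since $B\geq 0$, Schur's theorem (quoted in the paragraph preceding the lemma) gives $M_B(P)=B\circ P\geq 0$. For any positive matrix the trace norm coincides with the trace, so
\[
\|M_B(P)\|_1=\mathrm{Tr}(B\circ P)=\sum_{k=0}^{n-1}B_{kk}P_{kk}\leq \|\mathrm{diag}(B)\|_\infty\sum_{k=0}^{n-1}P_{kk}=\|\mathrm{diag}(B)\|_\infty\,\|P\|_1.
\]

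\textbf{Step 2: reduction of the general case.} Write $A$ as a combination of four positive operators via the Cartesian and Jordan decompositions. Namely, set
\[
A_1:=\tfrac12(A+A^*),\qquad A_2:=\tfrac{1}{2i}(A-A^*),
\]
so that $A=A_1+iA_2$ with $A_1,A_2$ self-adjoint and $\|A_j\|_1\leq \|A\|_1$ (by the triangle inequality together with $\|A^*\|_1=\|A\|_1$). Then decompose each self-adjoint part as $A_j=A_j^+-A_j^-$ with $A_j^\pm\geq 0$ having orthogonal supports, so that $\|A_j^\pm\|_1\leq \|A_j\|_1\leq \|A\|_1$.

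\textbf{Step 3: assembling.} Using linearity of $M_B$, the triangle inequality in $\mathcal{L}_1$, and Step 1 applied to each of the four positive matrices $A_1^+,A_1^-,A_2^+,A_2^-$, we obtain
\[
\|M_B(A)\|_1\leq \sum_{j=1}^{2}\bigl(\|M_B(A_j^+)\|_1+\|M_B(A_j^-)\|_1\bigr)\leq \|\mathrm{diag}(B)\|_\infty\sum_{j=1}^{2}\bigl(\|A_j^+\|_1+\|A_j^-\|_1\bigr),
\]
and each of the four summands on the right is bounded by $\|A\|_1$, yielding the claimed constant $4$.

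\textbf{Main obstacle.} There is essentially none beyond invoking Schur's theorem: the positivity of $B\circ P$ collapses the trace norm to a trace and reduces everything to a diagonal estimate. The only place where a genuine choice is made is in the decomposition of $A$ into four positive pieces; a sharper grouping (using $\|A_j^+\|_1+\|A_j^-\|_1=\|A_j\|_1$) would in fact give the constant $2$, but for the purposes of the present application the constant $4$ stated in the lemma is sufficient.
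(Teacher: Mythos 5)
Your proof is correct and follows essentially the same route as the paper: Schur's theorem handles the positive case via the trace, and the general case reduces to four positive matrices through the Cartesian/Jordan decomposition (the paper writes this directly as $A=A_1-A_2+iA_3-iA_4$ citing \cite[p.216]{BR}). Your remark that grouping $\|A_j^+\|_1+\|A_j^-\|_1=\|A_j\|_1$ would improve the constant to $2$ is also correct, though not needed for the application.
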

\begin{proof} Suppose first that $A\geq 0.$ It follows that $M_B(A)\geq 0.$ Hence,
\begin{align*}\|M_B(A)\|_1&={\rm Tr}(M_B(A))={\rm Tr}(A\circ B)=\sum_{k=0}^{n-1}A_{k,k}B_{k,k}\cr &
\leq(\max_{0\leq k<n}B_{k,k})\cdot(\sum_{k=0}^{n-1}A_{k,k})=\|{\rm diag}(B)\|_{\infty}{\rm Tr}(A)=\|{\rm diag}(B)\|_{\infty}\|A\|_1.
\end{align*}

Consider now the general case of an arbitrary $A\in M_n.$ Using Jordan decomposition (see e.g. \cite[p.216]{BR}), we write
$$A=A_1-A_2+iA_3-iA_4,\quad A_m\geq0,\ \|A_m\|_1\leq\|A\|_1,\quad 1\leq m\leq 4.$$
Therefore,
$$\|M_B(A)\|_1\leq\sum_{m=1}^4\|M_B(A_m)\|_1\leq\|{\rm diag}(B)\|_{\infty}\cdot(\sum_{m=1}^4\|A_m\|_1).$$
\end{proof}

\section{Proof of Theorem \ref{abs lip}}\label{Sect=MainTheorem}

The following lemma can be found in \cite{Bhatia11}. Its short proof is included for convenience of the reader.

\begin{lem}\label{bhatia positive} If $\alpha_k>0,$ $0\leq k<n,$ are decreasing, then the matrix
$$\Phi=\left\{\frac{\alpha_{\max\{k,l\}}}{\alpha_k+\alpha_l}\right\}_{k,l=0}^{n-1}$$
is positive semidefinite.
\end{lem}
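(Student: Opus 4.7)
The plan is to exploit the fact that since $\{\alpha_k\}$ is decreasing, $\alpha_{\max\{k,l\}} = \min\{\alpha_k,\alpha_l\}$, and then represent both $\min\{\alpha_k,\alpha_l\}$ and $(\alpha_k+\alpha_l)^{-1}$ as single integrals so that each entry of $\Phi$ becomes a double integral of a rank-one PSD kernel in $(k,l)$. Summing (integrating) rank-one PSD matrices preserves positive semidefiniteness, which finishes the argument.

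\medskip

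\noindent\textbf{Step 1: Turn $\alpha_{\max}$ into $\min$.} Since $\alpha_0 \geq \alpha_1 \geq \cdots > 0$, we have $\alpha_{\max\{k,l\}} = \min\{\alpha_k,\alpha_l\}$ for all $0 \le k,l < n$.

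\medskip

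\noindent\textbf{Step 2: Two elementary integral identities.} For positive reals $x,y$,
\begin{equation*}
\frac{1}{x+y} = \int_0^\infty e^{-(x+y)s}\, ds, \qquad \min\{x,y\} = \int_0^\infty \chi_{[0,x]}(t)\chi_{[0,y]}(t)\, dt.
\end{equation*}
Applying these with $x=\alpha_k$, $y=\alpha_l$ and multiplying yields
\begin{equation*}
\Phi_{k,l} = \frac{\min\{\alpha_k,\alpha_l\}}{\alpha_k+\alpha_l} = \int_0^\infty\!\!\int_0^\infty \chi_{[0,\alpha_k]}(t)\chi_{[0,\alpha_l]}(t)\, e^{-\alpha_k s}\, e^{-\alpha_l s}\, dt\, ds.
\end{equation*}

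\medskip

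\noindent\textbf{Step 3: Rank-one decomposition.} For fixed $s,t \ge 0$ define the vector $u(s,t) \in \mathbb{R}^n$ by
\begin{equation*}
u(s,t)_k := \chi_{[0,\alpha_k]}(t)\, e^{-\alpha_k s}, \qquad 0 \le k < n.
\end{equation*}
Then the integrand above equals $u(s,t)_k\, u(s,t)_l$, so
\begin{equation*}
\Phi = \int_0^\infty \int_0^\infty u(s,t)\, u(s,t)^T\, dt\, ds.
\end{equation*}
The integrand is a rank-one positive semidefinite matrix for each $(s,t)$, the integral converges absolutely entrywise, and positive semidefiniteness is preserved under integration; equivalently, for any $\xi \in \mathbb{R}^n$,
\begin{equation*}
\langle \Phi\xi,\xi\rangle = \int_0^\infty\!\!\int_0^\infty \bigl|\langle u(s,t),\xi\rangle\bigr|^2 dt\, ds \ge 0.
\end{equation*}
Hence $\Phi \ge 0$, as claimed.

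\medskip

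There is no real obstacle in this argument; the only point to be careful about is the ordering of the sequence (so that $\alpha_{\max\{k,l\}}$ coincides with $\min\{\alpha_k,\alpha_l\}$), after which the standard Laplace/indicator integral representations deliver the desired rank-one decomposition immediately.
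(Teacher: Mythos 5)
Your proof is correct and takes a genuinely different route from the paper's. The paper's argument is purely algebraic: it writes $\Phi$ as the telescoping sum
\[
\Phi=\sum_{k=0}^{n-2}(\alpha_k-\alpha_{k+1})\,P_k\Phi_1 P_k+\alpha_{n-1}\Phi_1,
\]
where $\Phi_1=\{1/(\alpha_k+\alpha_l)\}$ is the Cauchy matrix and $P_k$ are the leading principal projections, then observes that each summand is a nonnegative multiple of a compression of the (known-to-be-PSD) Cauchy matrix. Your argument instead starts from the observation that monotonicity converts $\alpha_{\max\{k,l\}}$ into $\min\{\alpha_k,\alpha_l\}$ (the paper does not make this explicit), and then represents the entire entry $\Phi_{k,l}$ as a double integral of a rank-one kernel; the positivity is read off immediately from $\langle\Phi\xi,\xi\rangle=\int\int|\langle u(s,t),\xi\rangle|^2\,dt\,ds\ge 0$. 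In effect you are exhibiting $\Phi$ as the Schur product of the min kernel and the Cauchy kernel, both PSD, but you bypass the Schur product theorem by combining the two integral representations into one. What your approach buys is self-containment: the paper must appeal to the positivity of the Cauchy matrix as a cited fact, whereas you prove everything from two one-line integral identities and Fubini (justified since the integrand is nonnegative). What the paper's approach buys is that it stays within the finite-dimensional, purely algebraic framework and exhibits $\Phi$ explicitly as a nonnegative combination of compressions of a single PSD matrix, which is more in keeping with the operator-algebraic machinery used in the rest of the paper. Both are valid and comparably elementary.
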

\begin{proof} Set
$$\Phi_1=\left\{\frac1{\alpha_k+\alpha_l}\right\}_{k,l=0}^{n-1}.$$
Consider (rank one) projections $p_k,$ $0\leq k<n,$ given by diagonal matrix units. That is,
$$(p_k)_{i,j}=
\begin{cases}
0,\quad i\neq k\\
0,\quad j\neq k\\
1,\quad i=j=k
\end{cases}
$$
and set
$$
P_k:=\sum _{j=0}^{k-1} p_j,\ 0\leq k<n.
$$
The following equality can be verified directly.\footnote{For $n=2,$ we have
$$\begin{pmatrix}
\frac{\alpha_0}{\alpha_0+\alpha_0}&\frac{\alpha_1}{\alpha_0+\alpha_1}\\
\frac{\alpha_1}{\alpha_0+\alpha_1}&\frac{\alpha_1}{\alpha_1+\alpha_1}
\end{pmatrix}
=
\begin{pmatrix}
\frac{\alpha_0-\alpha_1}{\alpha_0+\alpha_0}&0\\
0&0
\end{pmatrix}
+
\begin{pmatrix}
\frac{\alpha_1}{\alpha_0+\alpha_0}&\frac{\alpha_1}{\alpha_0+\alpha_1}\\
\frac{\alpha_1}{\alpha_0+\alpha_1}&\frac{\alpha_1}{\alpha_1+\alpha_1}
\end{pmatrix}.$$
For $n=3,$ we have
$$\begin{pmatrix}
\frac{\alpha_0}{\alpha_0+\alpha_0}&\frac{\alpha_1}{\alpha_0+\alpha_1}&\frac{\alpha_2}{\alpha_0+\alpha_2}\\
\frac{\alpha_1}{\alpha_0+\alpha_1}&\frac{\alpha_1}{\alpha_1+\alpha_1}&\frac{\alpha_2}{\alpha_1+\alpha_2}\\
\frac{\alpha_2}{\alpha_0+\alpha_2}&\frac{\alpha_2}{\alpha_1+\alpha_2}&\frac{\alpha_2}{\alpha_2+\alpha_2}
\end{pmatrix}
=$$
$$=
\begin{pmatrix}
\frac{\alpha_0-\alpha_1}{\alpha_0+\alpha_0}&0&0\\
0&0&0\\
0&0&0
\end{pmatrix}
+
\begin{pmatrix}
\frac{\alpha_1-\alpha_2}{\alpha_0+\alpha_0}&\frac{\alpha_1-\alpha_2}{\alpha_0+\alpha_1}&0\\
\frac{\alpha_1-\alpha_2}{\alpha_0+\alpha_1}&\frac{\alpha_1-\alpha_2}{\alpha_1+\alpha_1}&0\\
0&0&0
\end{pmatrix}
+\begin{pmatrix}
\frac{\alpha_2}{\alpha_0+\alpha_0}&\frac{\alpha_2}{\alpha_0+\alpha_1}&\frac{\alpha_2}{\alpha_0+\alpha_2}\\
\frac{\alpha_2}{\alpha_0+\alpha_1}&\frac{\alpha_2}{\alpha_1+\alpha_1}&\frac{\alpha_2}{\alpha_1+\alpha_2}\\
\frac{\alpha_2}{\alpha_0+\alpha_2}&\frac{\alpha_2}{\alpha_1+\alpha_2}&\frac{\alpha_2}{\alpha_2+\alpha_2}
\end{pmatrix}.$$
For larger $n,$ the decomposition follows exactly the same way.
}
\begin{equation}\label{baq}
\Phi=\Big(\sum_{k=0}^{n-2}(\alpha_k-\alpha_{k+1})P_k\Phi_1 P_k\Big)+\alpha_{n-1}\Phi_1.
\end{equation}
It is well known (see e.g. \cite{Bhatia}) that the Cauchy matrix $\Phi_1$ is positive semidefinite. It is now immediate from \eqref{baq} that the matrix $\Phi$ is also positive semidefinite.
\end{proof}

\begin{lem}\label{key lemma} Let $\alpha_k>0,$ $0\leq k<n,$ be decreasing. Define an operator $S:M_n\to M_n$ by setting
$$S(A)=\sum_{k,l=0}^{n-1}\frac{\alpha_k-\alpha_l}{\alpha_k+\alpha_l}p_kAp_l,$$
where $p_k$, $0\leq k<n,$ are the pairwise orthogonal rank one projections in $M_n$. We have
$$\|S(A)\|_{1,\infty}\leq\frac{80e}{\pi}\|A\|_1$$
for every $A\in M_n.$
\end{lem}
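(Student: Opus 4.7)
The plan is to recognise $S$ as a Schur multiplier and factorise its symbol into a signed triangular piece and a positive semidefinite piece controlled by Lemma \ref{bhatia positive}. In the basis diagonalising the $p_k$ one has $S(A)=M_\phi(A)=A\circ\phi$ with
$$
\phi(k,l)=\frac{\alpha_k-\alpha_l}{\alpha_k+\alpha_l}.
$$
The crucial algebraic identity is
$$
\phi(k,l)=\varepsilon(k,l)\bigl(1-2\Psi(k,l)\bigr),\qquad \varepsilon(k,l)=\operatorname{sgn}(l-k),
$$
where $\Psi(k,l)=\alpha_{\max\{k,l\}}/(\alpha_k+\alpha_l)$ is precisely the Bhatia matrix of Lemma \ref{bhatia positive}: since $(\alpha_k)$ is decreasing, $\Psi(k,l)=\alpha_l/(\alpha_k+\alpha_l)$ for $k\le l$, so $1-2\Psi(k,l)=\phi(k,l)$, and for $k>l$ the numerator flips sign. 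Because Schur multipliers commute and compose by Hadamard multiplication of their symbols (so $M_B M_C=M_{B\circ C}$), the factorisation transports to the operator level as
$$
S(A)=M_\varepsilon\bigl(A-2\,M_\Psi(A)\bigr).
$$

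This reduces the lemma to two independent estimates. First, Lemma \ref{bhatia positive} gives $\Psi\ge 0$ and its diagonal equals $1/2$ everywhere; hence Lemma \ref{schur positive} immediately yields $\|M_\Psi(A)\|_1\le 2\|A\|_1$, and the triangle inequality in $\mathcal L_1$ gives $\|A-2M_\Psi(A)\|_1\le 5\|A\|_1$. Second, one needs a weak-$(1,1)$ estimate for $M_\varepsilon$. Writing $\varepsilon=\chi_{\{k<l\}}-\chi_{\{k>l\}}$ exhibits $M_\varepsilon$ as $T_+-T_-$, the antisymmetric combination of the strict upper and lower triangular Schur projections. These are the classical Gohberg--Krein/Macaev triangular truncations, of weak type $(1,1)$: there is an absolute constant $K$ (of order $e/\pi$) with $\|T_\pm(B)\|_{1,\infty}\le K\|B\|_1$ for every $B\in\mathcal L_1$. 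The quasi-triangle inequality \eqref{quasi-triangle} then promotes this to $\|M_\varepsilon(B)\|_{1,\infty}\le 4K\|B\|_1$.

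Chaining the two estimates through $B:=A-2M_\Psi(A)$ gives $\|S(A)\|_{1,\infty}\le 20K\|A\|_1$, which has the claimed form; the specific value $80e/\pi$ then comes from $K=4e/\pi$ (the sharp weak-$(1,1)$ constant for triangular truncation), possibly after a more careful book-keeping in the quasi-triangle step. The main obstacle is purely analytic: once the identity $\phi=\varepsilon(1-2\Psi)$ is spotted, everything else is routine Schur-multiplier algebra, and the only non-algebraic input is the weak-type $(1,1)$ estimate for triangular truncation. It is precisely the failure of this estimate to strengthen into $\mathcal L_1\to\mathcal L_1$ boundedness that forces the use of the quasi-norm $\|\cdot\|_{1,\infty}$ throughout.
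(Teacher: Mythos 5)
Your proposal is essentially the paper's proof: you factor the symbol $\phi$ of $S$ into a triangular sign pattern times a positive semidefinite piece controlled by Lemma \ref{bhatia positive}, handle the positive piece in $\mathcal L_1$ via Lemma \ref{schur positive}, and handle the sign pattern by the Gohberg--Krein weak-type estimate for triangular truncation. The paper writes this as $S=(2T-1)(2M_\Phi-1)$ where $T$ is the lower triangular truncation, observes that $2M_\Phi-1$ annihilates the diagonal (since $\operatorname{diag}\Phi\equiv\frac12$), and then applies the stated GK bound $\|(2T-1)(X)\|_{1,\infty}\le\frac{4e}{\pi}\|X\|_1$ for self-adjoint zero-diagonal $X$, promoted to $\frac{16e}{\pi}$ for general zero-diagonal $X$ by splitting into real and imaginary parts with a quasi-triangle step; your $\varepsilon$ and $1-2\Psi$ are just the negatives of the paper's two factors off the diagonal, so the decompositions coincide. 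The one place your write-up is loose is the constant: you apply a hypothetical weak-$(1,1)$ bound with constant $4e/\pi$ to each of $T_\pm$ separately and then a quasi-triangle inequality, whereas the paper's GK reference gives $4e/\pi$ specifically for the \emph{signed} truncation $2T-1$ acting on self-adjoint zero-diagonal inputs; the routes lead to the same $80e/\pi$, but yours needs the zero-diagonal observation (automatic since $\operatorname{diag}(A-2M_\Psi(A))=0$) to invoke the cited form of the theorem cleanly.
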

\begin{proof} It is sufficient to prove the assertion for the special case of projections $p_k$ defined in the proof of the preceding lemma.
Set $T$ to be the triangular truncation operator defined by setting
$$(T(A))_{i,j}=
\begin{cases}
A_{i,j},\quad i\geq j\\
0,\quad i<j
\end{cases}
$$
and let $M_{\Phi}$ be the Schur multiplication operator with respect to $\Phi$ from Lemma \ref{bhatia positive}. We have
$$S=(2T-1)(2M_{\Phi}-1).$$
Indeed,
$$(S(A))_{k,l}=\frac{\alpha_k-\alpha_l}{\alpha_k+\alpha_l}A_{k,l}=(\frac{2\alpha_k}{\alpha_k+\alpha_l}-1)A_{k,l}=((2M_{\Phi}-1)(A))_{k,l},\quad k\geq l$$
and
$$(S(A))_{k,l}=\frac{\alpha_k-\alpha_l}{\alpha_k+\alpha_l}A_{k,l}=(1-\frac{2\alpha_l}{\alpha_k+\alpha_l})A_{k,l}=-((2M_{\Phi}-1)(A))_{k,l},\quad k<l.$$
It is known (see \cite[Theorem IV.8.2]{GK1}) that
$$\|(2T-1)(X)\|_{1,\infty}\leq \frac{4e}{\pi}\|X\|_1,\quad X=X^*\in M_n,\ {\rm diag}(X)=0.$$
Thus,
$$\|(2T-1)(X)\|_{1,\infty}\leq\frac{16e}{\pi}\|X\|_1,\quad X\in M_n,\ {\rm diag}(X)=0.$$
By Lemma \ref{schur positive} and Lemma \ref{bhatia positive}, we have
$$\|2M_{\Phi}-1\|_{\mathcal{L}_1\to\mathcal{L}_1}\leq 1+8\|{\rm diag}(\Phi)\|_{\infty}=5.$$
Therefore,
$$\|S(A)\|_{1,\infty}\leq\frac{16e}{\pi}\|(2M_{\Phi}-1)(A)\|_1\leq\frac{80e}{\pi}\|A\|_1.$$
\end{proof}

\begin{lem}\label{prefinal lemma} Let $A,B\in M_{2n}$ be identically and symmetrically distributed matrices. We have
$$\||A|-|B|\|_{1,\infty}\leq\Big(8+\frac{640e}{\pi}\Big)\|A-B\|_1.$$
\end{lem}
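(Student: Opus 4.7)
My plan is to reduce the estimate to an application of the operator $S$ of Lemma \ref{key lemma}. Since $A$ and $B$ are identically distributed self-adjoint matrices, they are unitarily equivalent, so I can write $B = UAU^*$ for some unitary $U \in M_{2n}$. Then $A - B = [A, U]U^*$ and $|A| - |B| = [|A|, U]U^*$, and since multiplication by the unitary $U^*$ on the right preserves singular values, the lemma reduces to proving
$$\|[|A|, U]\|_{1,\infty} \leq \Big(8 + \frac{640e}{\pi}\Big) \|[A, U]\|_1.$$

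Next I would use the symmetric distribution hypothesis to diagonalize $A$ in a basis adapted to the splitting $H = H_+ \oplus H_-$ into the positive and negative spectral subspaces of $A$, so that $A = \operatorname{diag}(\Delta, -\Delta)$ and $|A| = \operatorname{diag}(\Delta, \Delta)$ with $\Delta = \operatorname{diag}(\alpha_0, \ldots, \alpha_{n-1})$ and $\alpha_k > 0$ (any kernel of $A$ can be handled by first restricting to its support). Writing $U = (U_{ij})_{i,j=1}^{2}$ in block form, a direct calculation yields
$$[A, U] = \begin{pmatrix} [\Delta, U_{11}] & \Delta U_{12} + U_{12}\Delta \\ -(\Delta U_{21} + U_{21}\Delta) & -[\Delta, U_{22}] \end{pmatrix}, \quad [|A|, U] = \begin{pmatrix} [\Delta, U_{11}] & [\Delta, U_{12}] \\ [\Delta, U_{21}] & [\Delta, U_{22}] \end{pmatrix}.$$
Thus the diagonal blocks of the two matrices agree up to sign, while on the off-diagonal blocks the anticommutator with $\Delta$ appearing in $[A, U]$ is replaced by the commutator with $\Delta$ in $[|A|, U]$.

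The crucial observation, which supplies the weak-type bound, is that the operator $S$ of Lemma \ref{key lemma} built from the eigenvalues $\alpha_k$ of $\Delta$ realizes exactly this conversion: on the $(k,l)$-entry it multiplies by $(\alpha_k - \alpha_l)/(\alpha_k + \alpha_l)$, which sends the entry $(\alpha_k + \alpha_l) X_{k,l}$ of $\Delta X + X\Delta$ to $(\alpha_k - \alpha_l)X_{k,l}$; equivalently, $S(\Delta X + X\Delta) = \Delta X - X\Delta$ for every $X \in M_n$. Hence
$$[|A|, U]_{12} = S([A, U]_{12}), \qquad [|A|, U]_{21} = -S([A, U]_{21}),$$
while the diagonal blocks of $[|A|, U]$ equal $\pm$ those of $[A, U]$.

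To finish, I would decompose $[|A|, U]$ into its four blocks and apply the four-term quasi-triangle inequality, which via two nested applications of \eqref{quasi-triangle} has constant $4$. For each diagonal block, $\|[|A|, U]_{ii}\|_{1,\infty} \leq \|[|A|, U]_{ii}\|_1 = \|[A, U]_{ii}\|_1 \leq \|[A, U]\|_1$, using that compression by an orthogonal projection does not increase the trace norm. For each off-diagonal block, Lemma \ref{key lemma} gives $\|[|A|, U]_{ij}\|_{1,\infty} \leq \tfrac{80e}{\pi}\|[A, U]_{ij}\|_1 \leq \tfrac{80e}{\pi}\|[A, U]\|_1$. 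Summing the four contributions yields $4\bigl(2 + 2\cdot\tfrac{80e}{\pi}\bigr)\|[A, U]\|_1 = \bigl(8 + \tfrac{640e}{\pi}\bigr)\|[A, U]\|_1$, as required. The main conceptual obstacle is spotting the role of $S$ in converting the anticommutator structure on off-diagonal blocks into the commutator structure; once this is observed, the remainder is a straightforward combination of Lemma \ref{key lemma} with standard quasi-norm bookkeeping.
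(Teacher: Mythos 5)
Your proof is correct and arrives at the stated constant by essentially the same mechanism as the paper's: a four-term decomposition in which the two ``same-sign'' pieces are controlled by the trace norm and the two ``opposite-sign'' pieces by the operator $S$ of Lemma~\ref{key lemma}. The paper phrases this by directly sandwiching $|A|-|B|$ between the spectral projections $p_{ik}$ of $A$ and $q_{jl}$ of $B$ and then brings the opposite-sign pieces into the domain of $S$ via two ad hoc unitaries carrying $\operatorname{supp}(A_\pm)$ onto $\operatorname{supp}(B_\mp)$; your version reduces to the commutator $[|A|,U]$ via a single intertwining unitary and a $2\times2$ block decomposition, which makes the anticommutator-to-commutator conversion effected by $S$ especially transparent, but it is ultimately the same computation with the same constant bookkeeping.

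One small point deserves tightening: ``restricting to the support'' does not by itself dispose of the case $\ker A\neq\{0\}$, because $U$ carries $\operatorname{supp}(A)$ onto $\operatorname{supp}(B)$ rather than back to itself, so the ansatz $A=\operatorname{diag}(\Delta,-\Delta)$ with $\Delta>0$ is not available on all of $\mathbb{C}^{2n}$ and the off-diagonal blocks of $U$ meeting $\ker A$ require a (short) separate check. The cleanest remedy is to perturb $A$ and $B$ to invertible matrices while preserving the identical and symmetric distribution, apply the result there, and pass to the limit using the Fatou property of $\|\cdot\|_{1,\infty}$; the paper's proof carries the same implicit assumption.
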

\begin{proof} We have
$$A=\sum_{k=0}^{n-1}\mu(k,A_+)p_{1k}-\sum_{k=0}^{n-1}\mu(k,A_+)p_{2k},\quad |A|=\sum_{k=0}^{n-1}\mu(k,A_+)p_{1k}+\sum_{k=0}^{n-1}\mu(k,A_+)p_{2k},$$
$$B=\sum_{l=0}^{n-1}\mu(l,A_+)q_{1l}-\sum_{l=0}^{n-1}\mu(l,A_+)q_{2l},\quad |B|=\sum_{l=0}^{n-1}\mu(l,A_+)q_{1l}+\sum_{l=0}^{n-1}\mu(l,A_+)q_{2l},$$
where all the projections $p_{1k},$ $p_{2k},$ $0\leq k<n$ are pairwise orthogonal and have rank $1$ (and the same holds for the projections $q_{1l},$ $q_{2l},$ $0\leq l<n$). Hence, we have
\begin{align*}|A|-|B|&=\sum_{k,l=0}^{n-1}(\mu(k,A_+)-\mu(l,A_+))p_{1k}q_{1l}+\sum_{k,l=0}^{n-1}(\mu(k,A_+)-\mu(l,A_+))p_{2k}q_{2l}\cr &
+\sum_{k,l=0}^{n-1}(\mu(k,A_+)-\mu(l,A_+))p_{1k}q_{2l}+\sum_{k,l=0}^{n-1}(\mu(k,A_+)-\mu(l,A_+))p_{2k}q_{1l}\cr &
=\sum_{k,l=0}^{n-1}p_{1k}(A-B)q_{1l}-\sum_{k,l=0}^{n-1}p_{2k}(A-B)q_{2l}\cr &
+\sum_{k,l=0}^{n-1}\frac{\mu(k,A_+)-\mu(l,A_+)}{\mu(k,A_+)+\mu(l,A_+)}p_{1k}(A-B)q_{2l}\cr &
-\sum_{k,l=0}^{n-1}\frac{\mu(k,A_+)-\mu(l,A_+)}{\mu(k,A_+)+\mu(l,A_+)}p_{2k}(A-B)q_{1l}.
\end{align*}
Take unitary matrices $U,V\in M_{2n}$ such that $q_{2l}=Up_{1l}U^{-1}$ and $q_{1l}=Vp_{2l}V^{-1}$ for all $0\leq l<n.$ It is clear that
$$p_{1k}(A-B)q_{2l}=p_{1k}\Big({\rm supp}(A_+)(A-B)U{\rm supp}(A_+)\Big)p_{1l}\cdot U^{-1},\ 0\leq k, l<n$$
$$p_{2k}(A-B)q_{1l}=p_{2k}\Big({\rm supp}(A_-)(A-B)V{\rm supp}(A_-)\Big)p_{2l}\cdot V^{-1}\ 0\leq k, l<n.$$
Consider the operators $S_1:{\rm supp}(A_+)M_{2n}{\rm supp}(A_+)\to {\rm supp}(A_+)M_{2n}{\rm supp}(A_+)$
$$S_1(X):=\sum_{k,l=0}^{n-1}\frac{\mu(k,A_+)-\mu(l,A_+)}{\mu(k,A_+)+\mu(l,A_+)}p_{1k}Xp_{1l},\quad X\in {\rm supp}(A_+)M_{2n}{\rm supp}(A_+)$$
and $S_2:{\rm supp}(A_-)M_{2n}{\rm supp}(A_-)\to {\rm supp}(A_-)M_{2n}{\rm supp}(A_-)$
$$S_2(X):=\sum_{k,l=0}^{n-1}\frac{\mu(k,A_+)-\mu(l,A_+)}{\mu(k,A_+)+\mu(l,A_+)}p_{2k}Xp_{2l},\quad X\in {\rm supp}(A_-)M_{2n}{\rm supp}(A_-).$$
Employing these notations, we obtain
\begin{align*}|A|&-|B|={\rm supp}(A_+)(A-B){\rm supp}(B_+)-{\rm supp}(A_-)(A-B){\rm supp}(B_-)\cr &
+S_1({\rm supp}(A_+)(A-B)U{\rm supp}(A_+))\cdot U^{-1}-S_2({\rm supp}(A_-)(A-B)V{\rm supp}(A_-))\cdot V^{-1}.
\end{align*}
Since the algebras ${\rm supp}(A_+)M_{2n}{\rm supp}(A_+)$ and ${\rm supp}(A_-)M_{2n}{\rm supp}(A_-)$ are $*-$isomorphic to the algebra $M_n,$ it follows that the operators $S_1$ and $S_2$ satisfy the assumptions of Lemma \ref{key lemma}. Applying Lemma \ref{key lemma}, we obtain
\begin{align*}
 \frac14\||A|&-|B|\|_{1,\infty}\leq \|{\rm supp}(A_+)(A-B){\rm supp}(B_+)\|_{1,\infty}+
\|{\rm supp}(A_-)(A-B){\rm supp}(B_-)\|_{1,\infty}\cr &
+\|S_1({\rm supp}(A_+)(A-B)U{\rm supp}(A_+))\|_{1,\infty}
+\|S_2({\rm supp}(A_-)(A-B)V{\rm supp}(A_-))\|_{1,\infty},
\end{align*}
and
$$\||A|-|B|\|_{1,\infty}\leq (4+4+4\cdot \frac{80e}{\pi}+4\cdot\frac{80e}{\pi})\|A-B\|_1=\Big(8+\frac{640e}{\pi}\Big)\|A-B\|_1.$$
\end{proof}

In the following lemma, we get rid of the auxiliary conditions on $A$ and $B$ imposed in Lemma \ref{prefinal lemma}.

\begin{lem}\label{abs lip fin} For all self-adjoint matrices $A,B\in M_{2n},$ we have
$$\||A|-|B|\|_{1,\infty}\leq\Big(34+\frac{2560e}{\pi}\Big)\cdot\|A-B\|_1.$$
\end{lem}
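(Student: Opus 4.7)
The plan is to remove the two ancillary hypotheses of Lemma \ref{prefinal lemma}---that $A$ and $B$ be identically and symmetrically distributed---via a direct-sum embedding, and then transfer the resulting bound back to the original pair $(A,B)$.

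Concretely, I would set
\[
\tilde A := A \oplus (-A) \oplus B \oplus (-B), \qquad \tilde B := B \oplus (-B) \oplus A \oplus (-A),
\]
viewed as self-adjoint elements of $M_{8n}$. Each is a direct sum of pieces of the form $X \oplus (-X)$, which forces $\mu(\tilde A_+)=\mu(\tilde A_-)$, so $\tilde A$ is symmetrically distributed (and likewise for $\tilde B$). Since $\tilde B$ is obtained from $\tilde A$ by permuting blocks, the two share the same spectrum with multiplicity, giving $\mu(\tilde A_\pm)=\mu(\tilde B_\pm)$ and thus identical distribution. Hence Lemma \ref{prefinal lemma} applies (with ``$2n$'' there replaced by $8n$) and yields
\[
\||\tilde A|-|\tilde B|\|_{1,\infty} \leq \left(8+\tfrac{640e}{\pi}\right)\|\tilde A-\tilde B\|_1.
\]

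Next I would relate the two sides back to the original quantities. Using $|X\oplus(-X)|=|X|\oplus|X|$, the difference $|\tilde A|-|\tilde B|$ is block-diagonal with two blocks equal to $|A|-|B|$ and two equal to $-(|A|-|B|)$; since $\mu$ is invariant under sign change, its singular value sequence is the fourfold dilation $\sigma_4\mu(|A|-|B|)$, whence $\||\tilde A|-|\tilde B|\|_{1,\infty}=4\||A|-|B|\|_{1,\infty}$. The analogous block computation gives $\|\tilde A-\tilde B\|_1=4\|A-B\|_1$. Dividing through by $4$ produces the clean estimate $\||A|-|B|\|_{1,\infty}\leq (8+640e/\pi)\|A-B\|_1$, which already implies the stated bound. (The target constant $34+2560e/\pi=4(8+640e/\pi)+2$ suggests that the authors perform the reduction in two separate stages---symmetrization first, then matching of distributions---each picking up an additional quasi-triangle factor of at most $2$.)

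The main obstacle is arranging the auxiliary pair so that both hypotheses of Lemma \ref{prefinal lemma} hold simultaneously: the more obvious doubling $A\oplus(-A)$, $B\oplus(-B)$ symmetrizes but does not match distributions, and $A\oplus(-B)$, $B\oplus(-A)$ matches distributions but is not symmetric. The fourfold direct sum above resolves both conditions at once, and the remaining work is routine block-diagonal algebra together with the behaviour of $\|\cdot\|_{1,\infty}$ under dilations.
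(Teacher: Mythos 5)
Your proof is correct, but it takes a genuinely different (and in fact sharper) route than the paper. The paper removes the two ancillary hypotheses of Lemma~\ref{prefinal lemma} in two separate stages: first, for symmetrically distributed $A,B$, it introduces an auxiliary matrix $C$ diagonal in the eigenbasis of $A$ with the same distribution as $B$, writes $|A|-|B|=(|A|-|C|)+(|C|-|B|)$, and pays a quasi-triangle factor of $2$ plus a triangle-inequality factor for $\|B-C\|_1\leq 2\|A-B\|_1$, landing at $34+2560e/\pi$; second, it handles the general case via $A\otimes F$, $B\otimes F$ with $F=\mathrm{diag}(1,-1)$, which scales both sides by $2$ and leaves the constant unchanged. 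Your fourfold ampliation $\tilde A=A\oplus(-A)\oplus B\oplus(-B)$, $\tilde B=B\oplus(-B)\oplus A\oplus(-A)$ achieves both the symmetric and the identical distribution requirements in a single step, and since $\|\,\cdot\oplus\cdot\oplus\cdot\oplus\cdot\,\|_{1,\infty}$ and $\|\cdot\|_1$ each scale exactly by $4$ under a fourfold block dilation of equidistributed blocks, the constant from Lemma~\ref{prefinal lemma} transfers verbatim. You therefore obtain $\||A|-|B|\|_{1,\infty}\leq\bigl(8+\tfrac{640e}{\pi}\bigr)\|A-B\|_1$, which is stronger than the stated bound (so the stated inequality holds a fortiori). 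The only point worth making explicit is the rank-count verification that $\tilde A_\pm$ and $\tilde B_\pm$ have rank at most $4n$ so that Lemma~\ref{prefinal lemma} applies in $M_{8n}$; this is immediate since $\tilde A_+=A_+\oplus A_-\oplus B_+\oplus B_-$ and $\mathrm{rank}(A_+)+\mathrm{rank}(A_-)\leq 2n$ (likewise for $B$). Your parenthetical guess that the authors' larger constant reflects a two-stage reduction with extra quasi-triangle losses is exactly right.
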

\begin{proof} Let matrices $A,B$ be symmetrically (but not necessarily identically) distributed,
$$A=\sum_{k=0}^{n-1}\mu(k,A_+)p_{1k}-\sum_{k=0}^{n-1}\mu(k,A_+)p_{2k},\ B=\sum_{k=0}^{n-1}\mu(k,B_+)q_{1k}-\sum_{k=0}^{n-1}\mu(k,B_+)q_{2k},$$
where all the projections $p_{1k}$, $p_{2k}$, $q_{1k}$, $q_{2k}$, $0\leq k<n$ are pairwise orthogonal and have rank $1$. We introduce an auxiliary matrix
$$C:=\sum_{k=0}^{n-1}\mu(k,B_+)p_{1k}-\sum_{k=0}^{n-1}\mu(k,B_+)p_{2k}.$$
Clearly, $B$ and $C$ are identically and symmetrically distributed matrices (in particular, we have $\mu(B)=\mu(C)$). Thus, we have
$$\||B|-|A|\|_{1,\infty}=\|(|B|-|C|)+(|C|-|A|)\|_{1,\infty}\leq 2\||B|-|C|\|_{1,\infty}+2\||A|-|C|\|_{1,\infty}.$$
By Lemma \ref{prefinal lemma}, we have
\begin{align*}\||B|-|C|\|_{1,\infty}&\leq\Big(8+\frac{640e}{\pi}\Big)\cdot\|B-C\|_1\leq \Big(8+\frac{640e}{\pi}\Big)\cdot(\|A-B\|_1+\|A-C\|_1)\cr &
=\Big(8+\frac{640e}{\pi}\Big)\cdot(\|A-B\|_1+\|\mu(A)-\mu(B)\|_1)\cr & \leq \Big(16+\frac{1280e}{\pi}\Big)\|A-B\|_1.
\end{align*}
Here, we used the fact (guaranteed by our definition of $C$) that $\|A-C\|_1=\|\mu(A)-\mu(B)\|_1$. Since the matrices $A$ and $C$ commute, it follows that
$$\||A|-|C|\|_{1,\infty}\leq\|A-C\|_{1,\infty}\leq\|A-C\|_1=\|\mu(A)-\mu(B)\|_1\leq\|A-B\|_1,$$
where in the last step we used the classical fact \cite[(1.22)]{Simon}.
Combining the above inequalities we complete the proof for the case of symmetrically distributed matrices.

Let now $A$ and $B$ be arbitrary self-adjoint matrices from $M_{2n}$. Consider an element $F\in M_2$ given by
$$F:=
\begin{pmatrix}
1&0\\
0&-1
\end{pmatrix}
$$
and observe that
$$|A\otimes F|-|B\otimes F|=(|A|-|B|)\otimes 1,$$
where $1$ is the identity in $M_2.$ Note that
$$\|X\otimes 1\|_{1,\infty}=2\|X\|_{1,\infty},\quad \|X\otimes 1\|_1=2\|X\|_1$$
Now, observing that $A\otimes F$ and $B\otimes F$ are symmetrically distributed matrices, we infer from the first part of the proof that
\begin{align*}\||A|-|B|\|_{1,\infty}&=\frac12\||A\otimes F|-|B\otimes F|\|_{1,\infty}\cr &
\leq\frac12\cdot (34+\frac{2560e}{\pi}\Big)\|A\otimes F-B\otimes F\|_1\cr &=(34+\frac{2560e}{\pi}\Big)\cdot\|A-B\|_1.\end{align*}
\end{proof}

\begin{proof}[Proof of Theorem \ref{abs lip}] Let $p_n,$ $n\geq 0,$ be a sequence of finite rank projections in $\mathcal{L}(H)$ such that $p_n\uparrow 1.$ By \cite[Corollary 1.5]{DDPS1},
$$\Big(|p_nAp_n|-|p_nBp_n|\Big)E\to\Big(|A|-|B|\Big)E$$
uniformly for every finite rank projection $E.$ By Lemma \ref{abs lip fin}, we have
$$\||p_nAp_n|-|p_nBp_n|\|_{1,\infty}\leq \Big(34+\frac{2560e}{\pi}\Big)\|p_n(A-B)p_n\|_1\leq \Big(34+\frac{2560e}{\pi}\Big)\|A-B\|_1.$$
Thus,
$$\mu(k,(|p_nAp_n|-|p_nBp_n|)E)\leq\mu(k,|p_nAp_n|-|p_nBp_n|)\leq\Big(34+\frac{2560e}{\pi}\Big)\|A-B\|_1\cdot\frac1{k+1},\quad k\geq0,$$
for every finite rank projection $E.$ Since uniform convergence implies the convergence of singular values, it follows that
$$\mu(k,(|A|-|B|)E)\leq\Big(34+\frac{2560e}{\pi}\Big)\|A-B\|_1\cdot\frac1{k+1},\quad k\geq0,$$
for every finite rank projection $E.$ Hence, using judicious choice of projection $E$   (which is possible since our proof yields that $|A|-|B|$ is compact and hence $E$ may be taken to be a suitable spectral projection of this operator),  we have
$$\mu(k,|A|-|B|)\leq\Big(34+\frac{2560e}{\pi}\Big)\|A-B\|_1\cdot\frac1{k+1},\quad k\geq0.$$
\end{proof}

\section{Theorem \ref{abs lip} for compact operators}\label{Sect=CompactOperators}

Define an ideal $(\mathcal{L}_{1,\infty})_0$ in $\mathcal{L}(H)$ by setting
$$(\mathcal{L}_{1,\infty})_0=\{A\in\mathcal{K}(H):\quad k\mu(k,A)\to0\mbox{ as }k\to\infty\}.$$
This ideal coincides with the closure of the ideal of all finite rank operators in $\mathcal{L}_{1,\infty}$ and is commonly called the separable part of $\mathcal{L}_{1,\infty}.$

Define a (non-linear) functional $\theta$ on $\mathcal{L}_{1,\infty}$ by setting
$$\theta(A)=\limsup_{k\to\infty}k\mu(k,A).$$

\begin{lem}\label{theta lemma} Let $A,B\in\mathcal{L}_{1,\infty}.$ We have
\begin{enumerate}[{\rm (a)}]
\item\label{thetaa} If $\frac1\alpha+\frac1{\beta}=1,$ then
$$\theta(A+B)\leq \alpha\theta(A)+\beta\theta(B).$$
\item\label{thetab} If $B\in\mathcal{L}_1,$ then $\theta(B)=0.$
\item\label{thetac} If $B\in\mathcal{L}_1,$ then $\theta(A+B)=\theta(A).$
\end{enumerate}
\end{lem}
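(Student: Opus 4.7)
For part (a), the plan is to exploit the classical singular value inequality
\[
\mu(k+l,A+B)\leq \mu(k,A)+\mu(l,B),\qquad k,l\geq 0,
\]
which I will take as a standard fact about singular numbers of compact operators. Given $\alpha,\beta>1$ with $1/\alpha+1/\beta=1$, for each large $n$ I set $k=\lfloor n/\alpha\rfloor$ and $l=\lfloor n/\beta\rfloor$, so that $k+l\leq n$, and $\mu(n,A+B)\leq\mu(k,A)+\mu(l,B)$. Multiplying by $n$ and using $n/k\to\alpha$, $n/l\to\beta$ yields
\[
n\,\mu(n,A+B)\leq \frac{n}{k}\bigl(k\,\mu(k,A)\bigr)+\frac{n}{l}\bigl(l\,\mu(l,B)\bigr),
\]
and passing to $\limsup_{n\to\infty}$ produces the stated bound.

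For part (b), the key observation is that $\mu(k,B)$ is a decreasing nonnegative sequence whose sum is finite (since $B\in\mathcal{L}_1$). For any $k$ we estimate
\[
\frac{k}{2}\,\mu(k,B)\leq \sum_{j=\lceil k/2\rceil}^{k}\mu(j,B),
\]
using monotonicity of $\mu(\cdot,B)$. The right-hand side is a tail of a convergent series and therefore tends to $0$, forcing $k\,\mu(k,B)\to 0$ and hence $\theta(B)=0$.

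For part (c), I simply combine (a) and (b). Applying (a) to $A+B$ and recalling $\theta(B)=0$ gives $\theta(A+B)\leq\alpha\,\theta(A)$ for every $\alpha>1$; letting $\alpha\downarrow 1$ yields $\theta(A+B)\leq\theta(A)$. Conversely, writing $A=(A+B)+(-B)$ and applying (a) together with $\theta(-B)=\theta(B)=0$ gives $\theta(A)\leq\alpha\,\theta(A+B)$ for every $\alpha>1$, hence $\theta(A)\leq\theta(A+B)$ in the limit.

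The only mildly delicate step is part (a), since one must verify that the quasi-triangle behaviour of $\theta$ matches the constants $\alpha,\beta$ dictated by the Hölder-type condition $1/\alpha+1/\beta=1$; the rest is routine once the singular value inequality $\mu(k+l,A+B)\leq\mu(k,A)+\mu(l,B)$ is in hand. Parts (b) and (c) are essentially formal consequences.
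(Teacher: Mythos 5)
Your proof is correct and follows essentially the same route as the paper: part (a) uses the singular value inequality $\mu(k+l,A+B)\leq\mu(k,A)+\mu(l,B)$ with the split $k=\lfloor n/\alpha\rfloor$, $l=\lfloor n/\beta\rfloor$, part (b) bounds $\tfrac{k}{2}\mu(k,B)$ by a tail of the convergent series $\sum_j\mu(j,B)$, and part (c) combines (a) and (b) with $\alpha\downarrow 1$ in both directions. The only cosmetic difference is that the paper phrases the tail argument in (b) via $\|B(1-p_k)\|_1\to0$ for finite-rank truncations $p_k\uparrow 1$, which is the same convergence you invoke directly from $\mu(B)\in\ell_1$.
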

\begin{proof} We have
$$\mu(k,A+B)\leq \mu([\frac{k}{\alpha}],A)+\mu([\frac{k}{\beta}],B).$$
Hence,
$$\theta(A+B)=\limsup_{k\to\infty}k\mu(k,A+B)\leq\limsup_{k\to\infty}k\mu([\frac{k}{\alpha}],A)+\limsup_{k\to\infty}k\mu([\frac{k}{\beta}],B).$$
It is obvious that
$$\limsup_{k\to\infty}k\mu([\frac{k}{\alpha}],A)=\alpha\theta(A),\quad \limsup_{k\to\infty}k\mu([\frac{k}{\beta}],B)=\beta\theta(B).$$
This proves \eqref{thetaa}.

If $B\in\mathcal{L}_1$ and if $p_n\uparrow 1,$ then $\|B(1-p_n)\|_1\to0$ (see e.g. \cite{CS}). Let $e_k$ be the eigenvector of $|B|$ corresponding to the eigenvalue $\mu(k,B)$ and set $p_k$ to be the projection on the linear span of $e_n,$ $0\leq n<k.$ We have
$$\sum_{m=k}^{\infty}\mu(m,B)=\|B(1-p_n)\|_1\to0.$$
Therefore,
$$\frac{k}{2}\mu(k,B)\leq\sum_{n=[k/2]}^k\mu(k,B)\leq\sum_{n=[k/2]}^{\infty}\mu(k,B)\to0.$$
This proves \eqref{thetab}.

If $A\in\mathcal{L}_{1,\infty}$ and $B\in\mathcal{L}_1,$ then it follows from \eqref{thetaa} and \eqref{thetab} that $\theta(A+B)\leq\alpha\theta(A).$ Since $\alpha>1$ is arbitrary, it follows that $\theta(A+B)\leq\theta(A).$ Applying the same argument to the operators $A+B\in\mathcal{L}_{1,\infty}$ and $-B\in\mathcal{L}_1,$ we infer that $\theta(A)\leq\theta(A+B).$ This proves \eqref{thetac}.
\end{proof}

\begin{lem}\label{key lemma infinite} Let $\alpha_k>0,$ $k\geq0,$ be decreasing. Define an operator $S:\mathcal{L}_2\to\mathcal{L}_2$ by setting
$$S(A)=\sum_{k,l=0}^{\infty}\frac{\alpha_k-\alpha_l}{\alpha_k+\alpha_l}p_kAp_l,$$
where $p_k,$ $k\geq0,$ are the pairwise orthogonal rank one projections in $\mathcal{L}(H).$ We have $S(A)\in(\mathcal{L}_{1,\infty})_0$ and
$$\|S(A)\|_{1,\infty}\leq\frac{80e}{\pi}\|A\|_1$$
for every $A\in\mathcal{L}_1.$
\end{lem}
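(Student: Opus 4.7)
The approach is to reduce to the finite-dimensional Lemma \ref{key lemma} by truncating with the projections $P_N := \sum_{k=0}^{N-1} p_k$, and then to upgrade membership in $\mathcal{L}_{1,\infty}$ to membership in the separable part $(\mathcal{L}_{1,\infty})_0$ using the functional $\theta$ and Lemma \ref{theta lemma}. The first issue is to interpret the defining series for $S(A)$; since the Schur multiplier symbol $\{(\alpha_k-\alpha_l)/(\alpha_k+\alpha_l)\}_{k,l}$ is bounded by $1$ in modulus, the operator $S$ is contractive on $\mathcal{L}_2$, so $S(A)$ is a well-defined Hilbert--Schmidt operator for every $A\in\mathcal{L}_1\subset\mathcal{L}_2$.

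For the quasi-norm bound, set $A_N := P_N A P_N$. Using $p_k P_N = p_k$ for $k<N$ and $p_k P_N = 0$ otherwise, a direct computation shows
$$S(A_N)=\sum_{k,l=0}^{N-1}\frac{\alpha_k-\alpha_l}{\alpha_k+\alpha_l}p_k A p_l=P_N S(A) P_N,$$
so $S(A_N)$ lies in the $N$-dimensional subalgebra $P_N\mathcal{L}(H)P_N\cong M_N$. Applying Lemma \ref{key lemma} to this compression yields
$$\|S(A_N)\|_{1,\infty}\leq \tfrac{80e}{\pi}\|A_N\|_1\leq\tfrac{80e}{\pi}\|A\|_1.$$
Since $A_N\to A$ in $\mathcal{L}_1$ (hence in $\mathcal{L}_2$) and $S$ is $\mathcal{L}_2$-contractive, $S(A_N)\to S(A)$ in $\mathcal{L}_2$, which implies convergence in measure. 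The Fatou property of $(\mathcal{L}_{1,\infty},\|\cdot\|_{1,\infty})$ recalled in the preliminaries then gives $S(A)\in\mathcal{L}_{1,\infty}$ with $\|S(A)\|_{1,\infty}\leq\tfrac{80e}{\pi}\|A\|_1$.

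To show $S(A)\in(\mathcal{L}_{1,\infty})_0$, I verify $\theta(S(A))=0$. Each $S(A_N)=P_N S(A)P_N$ is of finite rank, so trivially $\theta(S(A_N))=0$. By linearity of $S$ on $\mathcal{L}_2$, one has $S(A)-S(A_N)=S(A-A_N)$, and the quasi-norm bound just established yields
$$\theta(S(A)-S(A_N))\leq\|S(A)-S(A_N)\|_{1,\infty}\leq\tfrac{80e}{\pi}\|A-A_N\|_1\longrightarrow 0.$$
Fixing any $\alpha>1$ and setting $\beta=\alpha/(\alpha-1)$, Lemma \ref{theta lemma}(a) gives
$$\theta(S(A))\leq \alpha\,\theta(S(A_N))+\beta\,\theta(S(A)-S(A_N))=\beta\,\theta(S(A)-S(A_N)),$$
and letting $N\to\infty$ forces $\theta(S(A))=0$, i.e.\ $k\mu(k,S(A))\to 0$.

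The only genuine subtlety is the well-definedness of the infinite sum defining $S(A)$; once that is handled through the $\mathcal{L}_2$ interpretation, the rest is the standard Fatou-plus-truncation toolkit, together with the $\theta$-calculus of Lemma \ref{theta lemma} which is tailor-made for passing from a finite-rank approximant to the separable part.
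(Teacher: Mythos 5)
Your proof is correct and follows essentially the same strategy as the paper: truncation by $P_N=\sum_{k<N}p_k$ together with the $\theta$-calculus of Lemma \ref{theta lemma}. The only cosmetic differences are that you pass from the finite case to the norm bound via Fatou rather than reproving the Schur/triangular-truncation decomposition directly in $\mathcal{L}(H)$, and for the separable-part step you split off $P_NAP_N$ and invoke Lemma \ref{theta lemma}(a), whereas the paper splits off $(1-P_N)A(1-P_N)$ and uses Lemma \ref{theta lemma}(c); both reduce to the same estimate $\theta(S(A))\lesssim\|(1-P_N)A(1-P_N)\|_1\to0$.
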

\begin{proof} The norm estimate can be proved in exactly the same way as in Lemma \ref{key lemma}. Set $P_n=\sum_{k=0}^{n-1}p_k.$ We have
$$A=(1-P_n)A(1-P_n)+(AP_n+P_nA(1-P_n))$$
and, therefore,
$$S(A)=S((1-P_n)A(1-P_n))+S(AP_n+P_nA(1-P_n)).$$
We have $S(AP_n+P_nA(1-P_n))\in\mathcal{L}_1$ since this operator has finite rank (even though its norm may be quite large). It follows from Lemma \ref{theta lemma} that
$$\theta(S(A))=\theta(S((1-P_n)A(1-P_n))) $$
$$\leq\|S((1-P_n)A(1-P_n))\|_{1,\infty}\leq{\rm const}\cdot\|(1-P_n)A(1-P_n)\|_1.$$
However, $\|(1-P_n)A(1-P_n)\|_1\to0$ as $n\to\infty.$ It follows that $\theta(S(A))=0$ and, therefore, $S(A)\in(\mathcal{L}_{1,\infty})_0.$
\end{proof}

\begin{thm}\label{Thm=SeperablePart} If $A,B\in\mathcal{L}(H)$ are compact operators such that $A-B\in\mathcal{L}_1,$ then $|A|-|B|\in (\mathcal{L}_{1,\infty})_0.$
\end{thm}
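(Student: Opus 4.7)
The strategy is to redo the argument that established Theorem \ref{abs lip}, but replace each use of Lemma \ref{key lemma} by its infinite-dimensional counterpart Lemma \ref{key lemma infinite}. The latter already outputs elements of the separable part $(\mathcal{L}_{1,\infty})_0$, so in principle this should upgrade the conclusion from $\mathcal{L}_{1,\infty}$ to $(\mathcal{L}_{1,\infty})_0$. The crucial point is to perform the decomposition of $|A|-|B|$ \emph{directly} at the level of the compact operators $A, B$, rather than going through the finite-rank approximation $p_n A p_n$ used in the proof of Theorem \ref{abs lip}: that approximation only controls the quasi-norm, not the vanishing of $k\mu(k,\cdot)$, and the functional $\theta$ from Lemma \ref{theta lemma} is not lower semicontinuous under the convergence used there.

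First I would reduce to the identically and symmetrically distributed case by running the two reductions of Lemma \ref{abs lip fin} verbatim. The tensor trick with $F=\mathrm{diag}(1,-1)$ replaces $A,B$ by compact operators $A\otimes F, B\otimes F$ on $H\otimes\mathbb{C}^2$ that are symmetrically distributed, and since $\mu\bigl((|A|-|B|)\otimes 1\bigr)=\sigma_2\mu(|A|-|B|)$, membership in $(\mathcal{L}_{1,\infty})_0$ transfers in both directions. The auxiliary operator $C$ that enforces identical distribution remains well defined for compact $A,B$: if $A=\sum_k\mu(k,A_+)(p_{1k}-p_{2k})$ in the symmetric form, I set $C=\sum_k\mu(k,B_+)(p_{1k}-p_{2k})$, which is compact because $\mu(k,B_+)\to 0$. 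Exactly as in Lemma \ref{abs lip fin}, $\|A-C\|_1=\|\mu(A)-\mu(B)\|_1\le\|A-B\|_1$ and $B,C$ are identically and symmetrically distributed. Since $A$ and $C$ commute, $|A|-|C|\in\mathcal{L}_1\subset(\mathcal{L}_{1,\infty})_0$ (the trivial inclusion $\mathcal{L}_1\subset(\mathcal{L}_{1,\infty})_0$ follows from $\tfrac{k}{2}\mu(k,X)\le\sum_{j\ge k/2}\mu(j,X)\to 0$); so it remains to show $|B|-|C|\in(\mathcal{L}_{1,\infty})_0$.

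In the identically and symmetrically distributed case I reproduce the computation of Lemma \ref{prefinal lemma} with infinite sums. The algebraic identity
\[
|A|-|B|=\mathrm{supp}(A_+)(A-B)\mathrm{supp}(B_+)-\mathrm{supp}(A_-)(A-B)\mathrm{supp}(B_-)+S_1(X_1)U^{-1}-S_2(X_2)V^{-1},
\]
with $X_i\in\mathcal{L}_1$ determined by $A-B$ and appropriate supports and with $S_1,S_2$ the conditionally convergent double series from Lemma \ref{key lemma infinite}, goes through verbatim. The first two summands lie in $\mathcal{L}_1$. For the last two, Lemma \ref{key lemma infinite} gives $S_i(X_i)\in(\mathcal{L}_{1,\infty})_0$, and multiplication by the bounded operators $U^{-1},V^{-1}$ preserves $(\mathcal{L}_{1,\infty})_0$ by \eqref{msing}. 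Since $(\mathcal{L}_{1,\infty})_0$ is a linear subspace, the sum is in $(\mathcal{L}_{1,\infty})_0$.

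The main obstacle is the convergence justification for the infinite double sums in Step~3: the coefficients $(\mu(k)-\mu(l))/(\mu(k)+\mu(l))$ are bounded by $1$ but do not decay, so the series does not converge absolutely in any ideal norm. I would deal with this by working modulo $\mathcal{L}_1$ using Lemma \ref{theta lemma}\eqref{thetac}: truncate to $|k|,|l|<n$, observe that the truncated identity is a genuine finite-rank equality, show that the truncation error on $A-B$ tends to $0$ in $\|\cdot\|_1$ (since $A-B\in\mathcal{L}_1$ and $P_n\uparrow 1$), then invoke Lemma \ref{key lemma infinite} on each truncation and take $n\to\infty$. Since $\theta$ is unaffected by $\mathcal{L}_1$-perturbations and each finite-$n$ approximation of $|A|-|B|$ differs from $|A|-|B|$ itself by a trace-class remainder with norm controlled by $\|(1-P_n)(A-B)(1-P_n)\|_1\to 0$, this gives $\theta(|A|-|B|)=0$, which is precisely the conclusion $|A|-|B|\in(\mathcal{L}_{1,\infty})_0$.
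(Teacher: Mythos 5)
Your proposal is correct and takes the same route as the paper: the paper's proof of this theorem is a single sentence stating that it follows from Lemmas~\ref{prefinal lemma} and~\ref{abs lip fin} mutatis mutandis, which in context means substituting Lemma~\ref{key lemma infinite} (whose output already lies in $(\mathcal{L}_{1,\infty})_0$) for Lemma~\ref{key lemma}, exactly as you spell out. Your final truncation-and-$\theta$ paragraph is largely redundant, since the operator identity for $|A|-|B|$ is exact once $S_i$ is defined on $\mathcal{L}_2$ and Lemma~\ref{key lemma infinite} already internalizes the $\theta$-based truncation; the genuine extra care needed in the compact (hence non-invertible) case is rather the additional remainders $|A|\bigl(1-\mathrm{supp}(B)\bigr)$ and $\bigl(1-\mathrm{supp}(A)\bigr)|B|$ coming from the supports not filling the whole space, which are trace class (e.g.\ $|A|(1-\mathrm{supp}(B))=\mathrm{sgn}(A)\,(A-B)(1-\mathrm{supp}(B))$) and so lie in $\mathcal{L}_1\subset(\mathcal{L}_{1,\infty})_0$.
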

\begin{proof} The proof follows that in Lemma \ref{prefinal lemma} and Lemma \ref{abs lip fin} {\it mutatis mutandi}.

\end{proof}

\section{General semifinite version of Theorem \ref{abs lip}}\label{Sect=SemiFinite}

We begin by recalling a few relevant facts and notations from the theory of noncommutative integration on semifinite von Neumann algebras. For details on von Neumann algebra
theory, the reader is referred to e.g. \cite{Dix}, \cite{KR1}, \cite{KR2}
or \cite{Tak}. General facts concerning measurable operators may
be found in \cite{Ne}, \cite{Se} (see also \cite[Chapter
IX]{Ta2}). For the convenience of the reader, some of the basic
definitions are recalled.

In what follows, let $\mathcal{M}$ be a von Neumann algebra on a separable Hilbert space $H.$ A linear operator $A:{\rm dom}(A)\to H,$ where the domain ${\rm dom}(A)$ of $A$ is a linear subspace of $H,$ is said to be {\it affiliated} with $\mathcal{M}$ if it commutes with every element in $\mathcal{M}^{\prime}.$

An operator $A$ affiliated with $\mathcal{M}$ is called $\tau-$measurable if there exists a sequence $\left\{p_n\right\}_{n=1}^{\infty}$ of $\tau-$finite projections in $\mathcal{M}$ such that $p_n\downarrow 0$ and $(1-p_n)\left(H\right)\subset {\rm dom}(A)$ for all $n.$ The collection $\mathcal{S}(\mathcal{M},\tau) $ of all $\tau-$measurable operators is a unital $*-$algebra with respect to the strong sum and strong multiplication. It is well known that a linear operator $A$ affiliated with $\mathcal{M}$ belongs to $\mathcal{S}(\mathcal{M},\tau) $ if and only if there exists $\lambda>0$ such that $$\tau(E_{|A|}(\lambda,\infty))<\infty.$$
Here, $E_{|A|}$ is the spectral family of the operator $|A|$.
Alternatively, an unbounded operator $A$ affiliated with $\mathcal{M}$ is $\tau-$measurable (see \cite{FK}) if and only if
$$\tau\left(E_{|A|}(n,\infty)\right)=o(1),\quad n\to\infty.$$

Let a semifinite von Neumann  algebra $\mathcal{M}$ be equipped with a faithful normal semi-finite trace $\tau.$ Let $A\in\mathcal{S}(\mathcal{M},\tau).$ The generalized singular value function $\mu(A):t\rightarrow \mu(t;A)$ of the operator $A$ is defined by setting
$$\mu(s;A)=\inf\{\|A(1-p)\|_{\infty}:\ p\in\mathcal{M}\mbox{ is a projection,}\ \tau(p)\leq s\}.$$
There exists an equivalent definition which involves the distribution function of the operator $|A|.$ For every self-adjoint operator $A\in\mathcal{S}(\mathcal{M},\tau),$ setting
$$d_A(t)=\tau(E_A(t,\infty)),\quad t>0,$$
we have (see e.g. \cite{FK})
$$\mu(t;A)=\inf\{s\geq0:\ d_{|A|}(s)\leq t\}.$$

If $\mathcal{M}=\mathcal{L}(H)$ and $\tau$ is the standard trace ${\rm Tr},$ then it is not difficult to see that $\mathcal{S}(\mathcal{M},\tau)=\mathcal{M}.$ In this case, for $A\in\mathcal{M},$ we have
$$\mu(n;A)=\mu(t;A),\quad t\in[n,n+1),\quad n\geq0.$$
The sequence $\{\mu(n;A)\}_{n\geq0}$ is just the sequence of singular values of the operator $A.$

For every $\varepsilon,\delta>0,$ we define the set
$$V(\varepsilon,\delta)=\{x\in\mathcal{S}(\mathcal{M},\tau):\ \exists p=p^2=p^*\in\mathcal{M}\mbox{ such that } \|x(1-p)\|\leq\varepsilon,\ \tau(p)\leq\delta\}.$$
The topology generated by the sets $V(\varepsilon,\delta),$ $\varepsilon,\delta>0,$ is called a measure topology.

Let $L_1(0,\infty)$ and $L_{\infty}(0,\infty)$ be Lebesgue spaces on $(0,\infty).$

We define the space
$$\mathcal{L}_1(\mathcal{M},\tau)=\{A\in \mathcal{S}(\mathcal{M},\tau):\ \mu(A)\in L_1(0,\infty)\}.$$
It is well-known that the functional
$$\|\cdot\|_1:A\to \|\mu(A)\|_1$$
is a Banach norm on $\mathcal{L}_1(\mathcal{M},\tau).$ Similarly, we say that $A\in(\mathcal{L}_1+\mathcal{L}_{\infty})(\mathcal{M},\tau)$ if and only if
$\mu(A)\in(L_1+L_{\infty})(0,\infty).$ Here, we identify $\mathcal{M}$ with $\mathcal{L}_\infty(\mathcal{M},\tau).$ The space $(\mathcal{L}_1+\mathcal{L}_{\infty})(\mathcal{M},\tau)$ can be also viewed as a sum of Banach spaces $\mathcal{L}_1(\mathcal{M},\tau)$ and $\mathcal{L}_{\infty}(\mathcal{M},\tau)$ (the latter space is equipped with the uniform norm, which we denote simply by $\|\cdot\|_{\infty}$).

Define a linear space
$$(\mathcal{L}_1+\mathcal{L}_{\infty})(\mathcal{M},\tau)=\{A\in\mathcal{S}(\mathcal{M},\tau):\ \mu(A)\in L_1+L_{\infty}\}.$$

One can define the noncommutative weak $L_1$ space in a similar manner. Set
$$\mathcal{L}_{1,\infty}(\mathcal{M},\tau)=\{A\in\mathcal{S}(\mathcal{M},\tau):\ \sup_{t>0}t\mu(t,A)<\infty\}.$$
The mapping
$$\|\cdot\|_{1,\infty}:A\to \sup_{t>0}t\mu(t,A),\quad A\in\mathcal{L}_{1,\infty}(\mathcal{M},\tau)$$
defines a quasi-norm on $\mathcal{L}_{1,\infty}(\mathcal{M},\tau).$ It can be easily seen that $\mathcal{L}_{1,\infty}(\mathcal{M},\tau)$ equipped with the latter quasi-norm becomes a quasi-Banach space. The quasi-Banach space $\mathcal{L}_{1,\infty}(\mathcal{M},\tau)$ has the Fatou property: if $A_n\in\mathcal{L}_{1,\infty}(\mathcal{M},\tau),$ $\|A_n\|_{1,\infty}\leq1$ and $A_n\to A$ in measure, then $A\in\mathcal{L}_{1,\infty}(\mathcal{M},\tau)$ and $\|A\|_{1,\infty}\leq 1.$

\begin{lem}\label{key lemma semif} Let $\mathcal{M}$ be a semifinite von Neumann algebra. Let $\alpha_k>0,$ $0\leq k<n,$ be decreasing. Define an operator $S:\mathcal{M}\to\mathcal{M}$ by setting
$$S(A)=\sum_{k,l=0}^{n-1}\frac{\alpha_k-\alpha_l}{\alpha_k+\alpha_l}p_kAp_l,$$
where $p_k,$ $0\leq k<n,$ are the pairwise orthogonal $\tau-$finite projections in $\mathcal{M}.$ We have
$$\|S(A)\|_{1,\infty}\leq{\rm const}\cdot\|A\|_1,\quad A\in\mathcal{M}.$$
\end{lem}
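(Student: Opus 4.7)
The approach is to adapt the argument of Lemma \ref{key lemma} to the semifinite setting. Since each $p_k$ is $\tau$-finite and the sum is finite, the projection $P := \sum_{k=0}^{n-1} p_k$ is $\tau$-finite, and hence $N := P\mathcal{M}P$ is a finite von Neumann algebra (with the restriction of $\tau$). Because $S(A) = P S(A) P = S(PAP)$, it suffices to prove the estimate for $A \in N$. In other words, the reduction from $\mathcal{L}(H)$ to $M_n$ done implicitly in Lemma \ref{key lemma} is replaced here by the passage to the finite algebra $N$.

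I would then mirror the factorization from Lemma \ref{key lemma}:
$$S = (2T - 1)(2M_{\Phi} - 1),$$
where $T$ is the block triangular truncation $T(A) = \sum_{0 \leq l \leq k < n} p_k A p_l$ and $M_{\Phi}$ is the block Schur multiplier $M_{\Phi}(A) = \sum_{k,l} \Phi_{k,l}\, p_k A p_l$ with $\Phi = \{\alpha_{\max\{k,l\}}/(\alpha_k + \alpha_l)\}$ from Lemma \ref{bhatia positive}. The identity is verified block by block exactly as in the scalar case.

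For the Schur multiplier factor I would prove a block analogue of Lemma \ref{schur positive}: if a scalar matrix $B = \{b_{k,l}\}$ is positive semidefinite, then $M_B\colon A \mapsto \sum_{k,l} b_{k,l}\, p_k A p_l$ is completely positive, using the Stinespring-type representation $M_B(A) = \sum_j V_j A V_j^*$ coming from a factorization $b_{k,l} = \sum_j c_{k,j}\overline{c_{l,j}}$, with $V_j = \sum_k c_{k,j} p_k \in N$. For $A \geq 0$ one computes
$$\tau(M_B(A)) = \tau\Bigl(\Bigl(\sum_k b_{k,k} p_k\Bigr) A\Bigr) \leq \max_k b_{k,k} \cdot \tau(A),$$
and the Jordan decomposition argument used in Lemma \ref{schur positive} then yields the factor of $4$. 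Applied with $B = \Phi$ (so $b_{k,k} = 1/2$), this gives $\|2M_{\Phi} - 1\|_{\mathcal{L}_1 \to \mathcal{L}_1} \leq 5$.

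The main obstacle is the $(\mathcal{L}_1 \to \mathcal{L}_{1,\infty})$-boundedness of $2T - 1$ in the block semifinite setting, i.e.\ the weak $(1,1)$ endpoint of the Macaev--Gohberg--Krein theorem for the triangular truncation associated with the finite filtration $\{P_j := \sum_{k < j} p_k\}_{j=0}^{n}$ in the finite algebra $N$. The semifinite (nest) version of this endpoint is available from the triangular truncation results used in \cite{DDPS1, DDPS2} (and ultimately going back to \cite{GK1}); the crucial point is that the resulting constant depends on neither $n$ nor the individual ranks $\tau(p_k)$. Once this is in hand, composing the weak $(1,1)$ bound on $2T - 1$ with the $\mathcal{L}_1$-bound on $2M_\Phi - 1$ gives
$$\|S(A)\|_{1,\infty} \leq \mathrm{const}\cdot \|(2M_{\Phi}-1)(A)\|_1 \leq \mathrm{const}\cdot \|A\|_1,$$
as required.
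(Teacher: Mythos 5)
Your proposal follows essentially the same route the paper intends: the proof of Lemma~\ref{key lemma semif} in the paper is given in a single sentence (``follows that of Lemma~\ref{key lemma} \emph{mutatis mutandis}; replace the reference to \cite{GK1} with Theorem~1.4 of \cite{DDPS2}''), and you have correctly filled in exactly that outline --- the block factorization $S=(2T-1)(2M_\Phi-1)$, the $\mathcal{L}_1$-bound on $2M_\Phi-1$ via positivity of the block Schur multiplier, and the weak $(1,1)$ bound for the block triangular truncation from the semifinite nest-algebra literature, with the uniformity in $n$ and in $\tau(p_k)$ being precisely the point the cited theorem supplies. The one place you diverge slightly from the paper's presentation is in proving positivity of the block Schur multiplier: you use a Stinespring-type factorization $b_{k,l}=\sum_j c_{k,j}\overline{c_{l,j}}$, $V_j=\sum_k c_{k,j}p_k$, giving $M_B(A)=\sum_j V_jAV_j^*$ directly, whereas the paper isolates this as Lemma~\ref{Lem-NCSchur} and proves it by reducing to the rank-one case and passing to a strong limit of finite-rank cut-downs; your argument is a clean and valid alternative, and the trace identity $\tau(M_B(A))=\tau\bigl((\sum_k b_{k,k}p_k)A\bigr)$ together with the Jordan-decomposition step recovers the same constant as in Lemma~\ref{schur positive}.
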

\begin{proof} The proof follows that of Lemma \ref{key lemma} {\it mutatis mutandi}. The reference to \cite{GK1} must be replaced with the reference to Theorem 1.4 in \cite{DDPS2}.
\end{proof}

\begin{lem}\label{prefinal lemma semif} Let $\mathcal{M}$ be a semifinite factor. Let $A,B\in \mathcal{M}$ be identically and symmetrically distributed finitely supported operators. We have
$$\||A|-|B|\|_{1,\infty}\leq{\rm const}\|A-B\|_1.$$
\end{lem}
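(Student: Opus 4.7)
The plan is to mirror the proof of Lemma~\ref{prefinal lemma} in the semifinite setting, with rank one projections replaced by spectral projections from a simultaneous discretization of $A_+,A_-,B_+,B_-$, Lemma~\ref{key lemma semif} used in place of Lemma~\ref{key lemma}, and a final limit passage supplied by the Fatou property of $\mathcal{L}_{1,\infty}(\mathcal{M},\tau)$.

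First I would construct the discretization. Identical and symmetric distribution forces $\mu(A_+)=\mu(A_-)=\mu(B_+)=\mu(B_-)$, and in particular the distribution functions of $A_+,A_-,B_+,B_-$ all coincide. For each integer $n\geq 1$, I would partition the eigenvalue range $[0,\|A\|_\infty]$ into $n$ half-open intervals $J_k$ of length $\|A\|_\infty/n$, and set
$$p_{1k}:=E^{A_+}_{J_k},\quad p_{2k}:=E^{A_-}_{J_k},\quad q_{1k}:=E^{B_+}_{J_k},\quad q_{2k}:=E^{B_-}_{J_k},$$
each family being pairwise orthogonal, with common traces $\tau(p_{1k})=\tau(p_{2k})=\tau(q_{1k})=\tau(q_{2k})$ by identical distribution. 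Choosing midpoints $\mu_k^{(n)}$ of $J_k$ (and retaining only indices with nonzero spectral mass, ordered so that $\mu_k^{(n)}$ is strictly decreasing), the step approximations
$$A_n:=\sum_k\mu_k^{(n)}(p_{1k}-p_{2k}),\qquad B_n:=\sum_k\mu_k^{(n)}(q_{1k}-q_{2k})$$
satisfy $\|A-A_n\|_\infty,\|B-B_n\|_\infty\leq\|A\|_\infty/n$, and consequently $\|A_n-B_n\|_1\to\|A-B\|_1$.

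Next, for each fixed $n$ I would run the algebraic identity for $|A_n|-|B_n|$ from Lemma~\ref{prefinal lemma} verbatim. The factor assumption on $\mathcal{M}$ ensures that any two projections of equal finite trace are Murray--von Neumann equivalent, so there exist unitaries $U,V\in\mathcal{M}$ with $q_{2l}=Up_{1l}U^{-1}$ and $q_{1l}=Vp_{2l}V^{-1}$, playing the role of the matrix unitaries. The expansion splits $|A_n|-|B_n|$ into two ``diagonal'' terms of the form ${\rm supp}(A_\pm)(A_n-B_n){\rm supp}(B_\pm)$, each dominated in $\|\cdot\|_{1,\infty}$ by $\|A_n-B_n\|_1$, plus two Schur-type terms supported in the corner algebras ${\rm supp}(A_\pm)\mathcal{M}{\rm supp}(A_\pm)$ (themselves semifinite). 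Lemma~\ref{key lemma semif} applied with the strictly decreasing sequence $(\mu_k^{(n)})$ controls the Schur-type terms by $\mathrm{const}\cdot\|A_n-B_n\|_1$, giving the uniform-in-$n$ estimate
$$\||A_n|-|B_n|\|_{1,\infty}\leq c_0\|A_n-B_n\|_1.$$

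Finally I would pass to the limit. Uniform convergence $A_n\to A$, $B_n\to B$ implies $|A_n|-|B_n|\to|A|-|B|$ in measure, and combined with $\|A_n-B_n\|_1\to\|A-B\|_1$ the Fatou property of $(\mathcal{L}_{1,\infty}(\mathcal{M},\tau),\|\cdot\|_{1,\infty})$ upgrades the uniform estimate to the desired $\||A|-|B|\|_{1,\infty}\leq c_0\|A-B\|_1$. The main obstacle I anticipate is the clean simultaneous discretization: arranging that the four families of spectral projections have matching trace patterns (which uses identical distribution crucially and the factor hypothesis to produce the intertwining unitaries $U,V$) and that the level sequence $(\mu_k^{(n)})$ is strictly decreasing so Lemma~\ref{key lemma semif} applies without modification; degenerate bands with repeated $\mu_k^{(n)}$ or vanishing spectral mass are harmless and can simply be discarded.
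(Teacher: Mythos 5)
Your overall strategy --- reduce to the case where the spectrum is a finite set, use the factor hypothesis to produce intertwining unitaries, run the algebraic decomposition of Lemma~\ref{prefinal lemma} with Lemma~\ref{key lemma semif} in place of Lemma~\ref{key lemma}, then approximate --- matches the paper's, and the proposal is essentially correct. However, the reduction to the finite--spectrum case is carried out quite differently. The paper does \emph{not} pass to a limit at this stage: it fixes a single $\varepsilon$ with $\int_0^\varepsilon \mu(s,A_+)\,ds\le\|A-B\|_1$, builds auxiliary operators $C,D$ that have the \emph{same spectral families} as $A,B$ but step--function eigenvalue profiles $\mu((k+1)\varepsilon,A_+)$, and then uses the quasi--triangle inequality together with the commutativity of $A$ with $C$ (resp.\ $B$ with $D$) to control $\||A|-|C|\|_{1,\infty}\le\|A-C\|_1$ directly. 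Your version instead discretizes the \emph{spectrum} of $A$ and $B$ into uniform bands, getting operators $A_n,B_n$ that converge uniformly, and then invokes the Fatou property to pass to the limit. Both routes work; the paper's one--shot $\varepsilon$--approximation sidesteps the limit entirely and keeps the constant bookkeeping visible, while your version is conceptually cleaner (uniform approximation plus Fatou) but has to verify that the uniform estimate is inherited in the limit.

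One genuine technical gap in your construction: partitioning $[0,\|A\|_\infty]$ into bands $J_k$ and using the midpoints as levels puts the band $J_0$ containing $0$ in an awkward position. The spectral projection $E^{A_+}_{J_0}$ includes the kernel of $A_+$, which need not be $\tau$--finite, so assigning it the positive level $\|A\|_\infty/(2n)$ destroys the finite--support property and (worse) makes $p_{10}$ and $p_{20}$ overlap on $\ker A$, breaking pairwise orthogonality. Your caveat ``retain only indices with nonzero spectral mass'' does not cure this, since the kernel typically has infinite mass. The fix is easy --- take the bands $J_k\subset(0,\|A\|_\infty]$ open at $0$, so that each $p_{jk}$ sits under $\mathrm{supp}(A_\pm)$ or $\mathrm{supp}(B_\pm)$, all of which are $\tau$--finite --- but as written the construction is not quite right. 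A second point you share with the paper (and with its ``mutatis mutandi'') and so cannot be charged against you: in the semifinite setting $\sum_k(p_{1k}+p_{2k})=\mathrm{supp}(A_n)\ne 1$, so the algebraic identity for $|A_n|-|B_n|$ acquires two extra terms of the form $|A_n|(1-\mathrm{supp}(|B_n|))$ and $(1-\mathrm{supp}(|A_n|))|B_n|$; these collapse to $P(A_n-B_n)Q$--type expressions since $B_n(1-\mathrm{supp}(|B_n|))=0$, and are therefore harmless, but it is worth being aware that the matricial identity is not used verbatim.
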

\begin{proof} Since the type I factors were already treated in Theorem \ref{abs lip}, we may assume without loss of generality that $\mathcal{M}$ is a type II factor. Suppose first that $\mu(A)$ (and, hence, $\mu(B)$) takes finitely many values. The proof in this case follows that of Lemma \ref{prefinal lemma} {\it mutatis mutandi}. Observe that in the last argument we have used the assumption that $\mathcal M$ is a factor   to find unitaries $U$ and $V$ as in the proof of Lemma \ref{prefinal lemma} (recall: any two projections in a type II factor with equal finite trace are unitarily equivalent \cite[Theorem V.1.8]{Tak}).

Let now $A,B\in\mathcal{M}$ be arbitrary identically and symmetrically distributed finitely supported operators. There exist projections $p_{k,s},$ $s>0,$ $1\leq k\leq 4,$ such that $\tau(p_{k,s})=s$ and
$$A_+=\int_0^{\infty}\mu(s,A_+)dp_{1,s},\quad A_-=\int_0^{\infty}\mu(s,A_+)dp_{2,s},$$
$$B_+=\int_0^{\infty}\mu(s,A_+)dp_{3,s},\quad B_-=\int_0^{\infty}\mu(s,A_+)dp_{4,s}.$$
Fix $\varepsilon$ such that
$$\int_0^{\varepsilon}\mu(s,A_+)ds\leq\|A-B\|_1$$
and set
$$C_+=\sum_{k=0}^{\infty}\mu((k+1)\varepsilon,A_+)\Big(p_{1,(k+1)\varepsilon}-p_{1,k\varepsilon}\Big),\quad C_-=\sum_{k=0}^{\infty}\mu((k+1)\varepsilon,A_+)\Big(p_{2,(k+1)\varepsilon}-p_{2,k\varepsilon}\Big),$$
$$D_+=\sum_{k=0}^{\infty}\mu((k+1)\varepsilon,A_+)\Big(p_{3,(k+1)\varepsilon}-p_{3,k\varepsilon}\Big),\quad D_-=\sum_{k=0}^{\infty}\mu((k+1)\varepsilon,A_+)\Big(p_{4,(k+1)\varepsilon}-p_{4,k\varepsilon}\Big).$$
It is easy to see that
$$\|A_+-C_+\|_1=\sum_{k=0}^{\infty}\int_{k\varepsilon}^{(k+1)\varepsilon}\Big(\mu(s,A_+)-\mu((k+1)\varepsilon,A_+)\Big)ds$$
$$\leq \int_0^{\varepsilon}\Big(\mu(s,A_+)-\mu(\varepsilon,A_+)\Big)ds+\sum_{k=1}^{\infty}\varepsilon(\mu(k\varepsilon,A_+)-\mu((k+1)\varepsilon,A_+)\Big) $$
$$=\int_0^{\varepsilon}\Big(\mu(s,A_+)-\mu(\varepsilon,A_+)\Big)ds+\varepsilon\mu(\varepsilon,A_+)=\int_0^{\varepsilon}\mu(s,A_+)ds\leq\|A-B\|_1.$$

We have
$$\||A|-|B|\|_{1,\infty}=\|(|A|-|C|)+(|C|-|D|)-(|B|-|D|)\|_{1,\infty}\leq $$
$$\leq 4\||A|-|C|\|_{1,\infty}+4\||C|-|D|\|_{1,\infty}+4\||B|-|D|\|_{1,\infty}.$$
Recall that $C$ and $D$ are symmetrically and identically distributed finitely supported operators. By construction, $\mu(C)$ (and, hence, $\mu(D)$) takes only finitely many values. We infer from the previous paragraph that
$$\||C|-|D|\|_{1,\infty}\leq{\rm const}\cdot\|C-D\|_1.$$
Since $A$ and $C$ commute, it follows that
$$\||A|-|C|\|_{1,\infty}\leq\|A-C\|_{1,\infty}\leq\|A-C\|_1\leq 2\|A-B\|_1.$$
Similarly,
$$\||B|-|D|\|_{1,\infty}\leq 2\|A-B\|_1.$$
Also, we have
$$\|C-D\|_1=\|(C-A)+(A-B)+(B-D)\|_1\leq$$
$$\leq\|C-A\|_1+\|A-B\|_1+\|B-D\|_1\leq 5\|A-B\|_1.$$
Combining these estimates, we conclude the proof.
\end{proof}

The following lemma should be compared to the results on positive Schur multipliers in Section \ref{Sect=SchurMultiplication}.

\begin{lem}\label{Lem-NCSchur}
Let $\mathcal{M} \subseteq \mathcal{L}(H)$ be a von Neumann algebra. Let $B \in M_n$ and $B\geq0$. Let $p_1, \ldots, p_n$ be  mutually orthogonal projections in $\mathcal{M}$. Consider the operator valued Schur multiplier (or double operator integral) defined by,
\begin{equation}\label{Eqn=SB}
S_B: \mathcal{M} \rightarrow \mathcal{M}: x \mapsto \sum_{i,j=1}^n B_{i,j} p_i x p_j.
\end{equation}
Then $S_B$ preserves positive operators: $S_B (x) \geq 0$ whenever $x \geq 0$.
\end{lem}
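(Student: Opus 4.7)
The plan is to mimic the classical proof of Schur's theorem on positive Hadamard products, but with numbers $x_i$ replaced by the mutually orthogonal projections $p_i$ acting on $x$.

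Since $B \in M_n$ is positive semidefinite, factor it as $B = C^*C$ for some $C \in M_n$, so that
\[
B_{i,j} = \sum_{k=1}^{n} \overline{C_{k,i}}\, C_{k,j}, \qquad 1 \leq i,j \leq n.
\]
Substituting this into the definition \eqref{Eqn=SB} of $S_B$ and interchanging the order of summation, one obtains
\[
S_B(x) \;=\; \sum_{k=1}^{n} \Bigl(\sum_{i=1}^{n} \overline{C_{k,i}}\, p_i\Bigr)\, x\, \Bigl(\sum_{j=1}^{n} C_{k,j}\, p_j\Bigr).
\]
Setting $y_k := \sum_{j=1}^{n} C_{k,j}\, p_j \in \mathcal{M}$ and using $p_j^* = p_j$, one has $y_k^* = \sum_{i} \overline{C_{k,i}}\, p_i$, hence
\[
S_B(x) \;=\; \sum_{k=1}^{n} y_k^*\, x\, y_k.
\]

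Now if $x \geq 0$, then each summand $y_k^* x y_k = (x^{1/2} y_k)^*(x^{1/2} y_k) \geq 0$, and consequently $S_B(x) \geq 0$. The mutual orthogonality of the $p_i$ is not actually needed for the positivity conclusion itself, but it is consistent with the intended application (where the decomposition of the argument along the projections $p_i$ recovers the classical Schur product on the ``diagonal blocks''). There is no serious obstacle here; the only point to be careful about is the index bookkeeping when passing from the scalar Schur factorization $B = C^*C$ to the operator identity, which is why I wrote out $y_k$ and $y_k^*$ explicitly.
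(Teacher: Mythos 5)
Your proof is correct, but it takes a genuinely different and in fact cleaner route than the paper's. The paper reduces to $\mathcal{M}=\mathcal{L}(H)$, then handles finite rank $p_i$ by splitting each into rank one pieces and invoking the classical matricial Schur theorem (verifying along the way, via a Kronecker product argument, that the ``blown up'' matrix $(B_{i,j})_{(i,m),(j,k)}$ remains positive), and finally passes to general $p_i$ by a strong limit of finite rank approximants $p_{i,m}\to p_i$. You instead lift the standard proof of Schur's theorem directly to the operator level: factoring $B=C^*C$, $B_{i,j}=\sum_k\overline{C_{k,i}}C_{k,j}$, and regrouping gives $S_B(x)=\sum_{k=1}^n y_k^* x\, y_k$ with $y_k=\sum_j C_{k,j}p_j\in\mathcal{M}$, from which positivity is immediate. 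This avoids the reduction to $\mathcal{L}(H)$, the Kronecker positivity check, and the strong limit altogether, and it actually establishes the stronger fact that $S_B$ is completely positive (being a sum of maps $x\mapsto y_k^* x y_k$), which the paper's argument does not make explicit. You are also right that mutual orthogonality of the $p_i$ plays no role in this positivity statement; it matters only in the surrounding application where $S_B$ is composed with triangular truncation. One small remark: since the $y_k$ lie in $\mathcal{M}$ and $\mathcal{M}$ is a von Neumann algebra, $S_B$ visibly maps $\mathcal{M}$ into itself without any appeal to the ambient $\mathcal{L}(H)$, so your proof is also self-contained in that respect.
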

\begin{proof}
The Schur multiplier extends to a map $S_B: \mathcal{L}(H) \rightarrow \mathcal{L}(H)$ prescribed by the same formula \eqref{Eqn=SB} and hence it suffices to prove the statement for $\mathcal{M} = \mathcal{L}(H)$. In case each of the projections $p_i$ are finite rank the statement is reduced to the matricial case and hence follows from Schur's theorem, see Section \ref{Sect=SchurMultiplication}. Indeed, this is true in case each $p_i$ is one dimensional. Else, write $p_i = \sum_m^{n_i} p_{i,m}$, a finite sum of mutually orthogonal rank 1 projections and apply the previous line to the set $\{ p_{i,m} \mid i, 1 \leq m \leq n_i\}$ using that $(B_{i,j})_{(i,1 \leq m \leq n_i),(j, 1 \leq k \leq n_j)}$ is again positive. The positivity of the latter matrix follows as this matrix is a corner of the Kronecker product $C_n\otimes B$ where $C_n$ is the $n\times n$-matrix with entries equal to 1. In the general case of not necessarily finite rank projections $p_i$ one can write each $p_i$ as a strong limit of finite rank projections $p_{i,m} \rightarrow p_i$. Putting $P_m = \sum_i p_{i,m}$ we see that $x \mapsto P_m S_B(x) P_m$ preserves positive operators and $P_m S_B(x) P_m \rightarrow S_B(x)$ strongly. This concludes the lemma.
\end{proof}

\begin{lem}\label{abs lip semif} Let $\mathcal{M}$ be a semifinite factor. Let $A,B\in\mathcal{M}$ be self-adjoint finitely supported operators. We have
$$\||A|-|B|\|_{1,\infty}\leq{\rm const}\|A-B\|_1.$$
\end{lem}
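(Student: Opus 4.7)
My proof plan parallels Lemma \ref{abs lip fin} in the matrix setting, using Lemma \ref{prefinal lemma semif} in place of Lemma \ref{prefinal lemma}. Since type I factors are already covered by Theorem \ref{abs lip}, I assume $\mathcal{M}$ is a type II factor. There are two stages: first handle symmetrically distributed $A, B$ by constructing an auxiliary operator $C$ that is identically distributed with $B$ but commutes with $A$; then reduce the general self-adjoint case to the symmetric one by tensoring with $F = {\rm diag}(1,-1) \in M_2$.

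For the symmetric case, assume $\mu(A_+)=\mu(A_-)$ and $\mu(B_+)=\mu(B_-)$. Using the factor property (any two projections of equal finite trace are equivalent in a type II factor), pick mutually orthogonal spectral families $\{p_{1,s}\}_{s\geq 0}, \{p_{2,s}\}_{s\geq 0}$ in $\mathcal{M}$ with $\tau(p_{i,s})=s$ and $p_{1,s}\perp p_{2,s}$ such that
\begin{equation*}
A_+=\int_0^\infty \mu(s,A_+)\, dp_{1,s}, \qquad A_- = \int_0^\infty \mu(s,A_+)\, dp_{2,s},
\end{equation*}
and define
\begin{equation*}
C := \int_0^\infty \mu(s,B_+)\, dp_{1,s} - \int_0^\infty \mu(s,B_+)\, dp_{2,s}.
\end{equation*}
Then $C$ is symmetrically and identically distributed with $B$ (so that $\mu(C)=\mu(B)$), and $A$ commutes with $C$ since both are diagonal with respect to the $\{p_{1,s},p_{2,s}\}$ resolution.

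Combining the quasi-triangle inequality with the two bounds
\begin{equation*}
\||A|-|C|\|_{1,\infty} \leq \|A-C\|_{1,\infty} \leq \|A-C\|_1 = \|\mu(A)-\mu(B)\|_1 \leq \|A-B\|_1,
\end{equation*}
where the first inequality uses commutativity of $A, C$ and the last is the semifinite analogue of \cite[(1.22)]{Simon}, and
\begin{equation*}
\||B|-|C|\|_{1,\infty} \leq {\rm const}\cdot \|B-C\|_1 \leq {\rm const}\cdot(\|A-B\|_1+\|A-C\|_1) \leq 2\,{\rm const}\cdot \|A-B\|_1
\end{equation*}
from Lemma \ref{prefinal lemma semif} applied to the identically and symmetrically distributed pair $(B, C)$, gives the desired estimate in the symmetric case.

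For arbitrary self-adjoint finitely supported $A, B$, tensor with $F$ and work in $\mathcal{M}\otimes M_2$, which is again a semifinite factor with trace $\tau\otimes{\rm Tr}$. The operators $A\otimes F, B\otimes F$ are finitely supported and symmetrically distributed, so the first stage applies. The identity $|A\otimes F|-|B\otimes F|=(|A|-|B|)\otimes 1_2$ together with the scalings $\|X\otimes 1_2\|_{1,\infty}=2\|X\|_{1,\infty}$ and $\|X\otimes 1_2\|_1 = 2\|X\|_1$ transfers the estimate back to $\mathcal{M}$. The main technical obstacle is the construction of the spectral families $p_{1,s}, p_{2,s}$ in $\mathcal{M}$: we need two orthogonal projections of trace at least $\max(\tau({\rm supp}(A_+)),\tau({\rm supp}(B_+)))$. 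In the symmetric case these traces are each at most $\tau(1)/2$, and the factor property supplies the required padding projections (with zero eigenvalues) outside the supports of $A$ and $B$, so the construction always goes through.
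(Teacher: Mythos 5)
Your proof is correct and follows essentially the same route the paper intends by its terse ``mutatis mutandi'' reference to Lemma \ref{abs lip fin}: construct the commuting auxiliary operator $C$ carrying $B$'s singular values on $A$'s spectral projections, split $|A|-|B|$ into $(|A|-|C|)+(|C|-|B|)$, estimate the first difference via commutativity and the second via Lemma \ref{prefinal lemma semif}, then reduce the general case to the symmetrically distributed one by tensoring with $F=\operatorname{diag}(1,-1)$. You have also correctly identified and resolved the one genuine technical point that the matrix argument does not face, namely that in a II$_1$ factor the spectral families $\{p_{1,s}\},\{p_{2,s}\}$ must be extendable to trace at least $\max(\tau(\operatorname{supp}(A_+)),\tau(\operatorname{supp}(B_+)))$ while remaining mutually orthogonal, which works because symmetric distribution bounds each support trace by $\tau(1)/2$. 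The paper's remark about invoking Lemma \ref{Lem-NCSchur} in place of Schur's theorem refers to the dependency chain underneath Lemma \ref{prefinal lemma semif} (through Lemma \ref{key lemma semif} and the semifinite analogue of Lemma \ref{schur positive}); since you treat Lemma \ref{prefinal lemma semif} as a proved black box, you do not need to reiterate that substitution here, and your argument is clean.
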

\begin{proof} The proof follows that of Lemma \ref{abs lip fin} {\it mutatis mutandi}. At the point that Schur's theorem is used, see the proof of Lemma \ref{schur positive}, Lemma \ref{Lem-NCSchur} can be invoked.
\end{proof}

\begin{lem}\label{final semif} If $\mathcal{M}$ is a semifinite factor and if $A,B\in(\mathcal{L}_1+\mathcal{L}_{\infty})(\mathcal{M},\tau)$ are such that $A-B\in\mathcal{L}_1(\mathcal{M},\tau),$ then $|A|-|B|\in\mathcal{L}_{1,\infty}(\mathcal{M},\tau)$ and
$$\||A|-|B|\|_{1,\infty}\leq{\rm const}\cdot\|A-B\|_1.$$
\end{lem}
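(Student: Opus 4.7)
The plan is to approximate $A$ and $B$ by bounded, finitely supported self-adjoint operators via a double compression, apply Lemma \ref{abs lip semif}, and pass to the limit following the compression argument at the end of the proof of Theorem \ref{abs lip}.

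For the reduction to bounded operators, I set $q_M := E_{|A|}([0,M]) \wedge E_{|B|}([0,M])$. Since $A, B$ are $\tau$-measurable, both $E_{|A|}([0,M])$ and $E_{|B|}([0,M])$ have $\tau$-finite complement for sufficiently large $M$ and increase to $1$; hence $\tau(1 - q_M) \leq \tau(1 - E_{|A|}([0,M])) + \tau(1 - E_{|B|}([0,M])) \to 0$, so $q_M \uparrow 1$. The range of $q_M$ is contained in that of $E_{|A|}([0,M])$, on which $|A|$ is bounded by $M$ (as $E_{|A|}([0,M])$ is invariant for $|A|$); similarly for $|B|$. Thus $A_M := q_M A q_M$ and $B_M := q_M B q_M$ are bounded self-adjoint elements of $\mathcal{M}$ satisfying $\|A_M - B_M\|_1 = \|q_M(A-B)q_M\|_1 \leq \|A - B\|_1$.

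For the reduction to finitely supported operators, pick an increasing net $r_n \uparrow 1$ of $\tau$-finite projections in $\mathcal{M}$ (available by semifiniteness) and set $A_{M,n} := r_n A_M r_n$, $B_{M,n} := r_n B_M r_n$; these are bounded, finitely supported, self-adjoint, so Lemma \ref{abs lip semif} gives
\begin{equation*}
\||A_{M,n}| - |B_{M,n}|\|_{1,\infty} \leq C \|A_{M,n} - B_{M,n}\|_1 \leq C \|A - B\|_1.
\end{equation*}

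To conclude, I use the compression argument from the proof of Theorem \ref{abs lip}. For any $\tau$-finite projection $E \in \mathcal{M}$, $(|A_{M,n}| - |B_{M,n}|) E \to (|A_M| - |B_M|) E$ in the measure topology as $n \to \infty$ (using strong convergence of $|A_{M,n}| \to |A_M|$ on bounded operators combined with the $\tau$-finiteness of $E$), so the Fatou property of $\mathcal{L}_{1,\infty}(\mathcal{M},\tau)$ yields $\mu(t, (|A_M| - |B_M|) E) \leq C \|A-B\|_1 / t$; a judicious choice of $E$ as a suitable spectral projection of $|A_M| - |B_M|$ then delivers $\||A_M| - |B_M|\|_{1,\infty} \leq C \|A - B\|_1$. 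Finally, letting $M \to \infty$, $\tau(1-q_M) \to 0$ implies $A_M \to A$ and $B_M \to B$ in measure, whence $|A_M| - |B_M| \to |A|-|B|$ in measure by continuity of the absolute value, and a second application of the Fatou property closes the proof. The main technical delicacy is the limit $n \to \infty$: compression by $r_n$ with $\tau(1-r_n)=\infty$ does not give measure convergence of $A_{M,n}$ to $A_M$ directly, so one must argue via compressions by $\tau$-finite projections $E$ and then select $E$ to capture the relevant part of $|A_M| - |B_M|$, mirroring the \emph{judicious choice of projection $E$} used at the end of the proof of Theorem \ref{abs lip}.
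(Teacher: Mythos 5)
Your proposal is correct and follows essentially the same route as the paper: first reduce the general $(\mathcal{L}_1+\mathcal{L}_\infty)$ case to bounded $A,B$ by compressing with the spectral cut-off $q_M=E_{|A|}[0,M]\wedge E_{|B|}[0,M]$, then reduce the bounded case to finitely supported operators via $\tau$-finite compressions $r_n$, apply Lemma \ref{abs lip semif}, and pass to the limit using compressions by $\tau$-finite $E$, convergence in measure, and the Fatou property (with the judicious choice of $E$). The only cosmetic difference is that you invoke the Fatou property where the paper cites a.e.\ convergence of singular value functions, and you appeal to continuity of the absolute value in measure where the paper cites Tikhonov and \cite[Corollary 1.5]{DDPS1} explicitly.
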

\begin{proof} Suppose first that $A,B\in\mathcal{M}.$ Let $p_n,$ $n\geq 0,$ be a sequence of $\tau-$finite projections in $\mathcal{M}$ such that $p_n\uparrow 1.$ By \cite[Corollary 1.5]{DDPS1},
$$\Big(|p_nAp_n|-|p_nBp_n|\Big)E\to\Big(|A|-|B|\Big)E$$
in measure for every $\tau-$finite projection $E.$ By Lemma \ref{abs lip semif}, we have
$$\||p_nAp_n|-|p_nBp_n|\|_{1,\infty}\leq {\rm const}\cdot\|p_n(A-B)p_n\|_1\leq{\rm const}\cdot\|A-B\|_1.$$
Therefore,
$$\mu(t,(|p_nAp_n|-|p_nBp_n|)E)\leq\mu(t,|p_nAp_n|-|p_nBp_n|)\leq\frac{{\rm const}}{t}\|A-B\|_1,\quad t>0,$$
for every $\tau-$finite projection $E.$ Since convergence in measure implies the (almost everywhere) convergence of singular value functions (see e.g. \cite[Lemma 7]{Sbik}), it follows that
$$\mu(t,(|A|-|B|)E)\leq\frac{{\rm const}}{t}\|A-B\|_1,\quad t>0,$$
for every $\tau-$finite projection $E.$ Hence, using a judicious choice of the projection $E$ (namely a suitable spectral projection of $|A|-|B|$), we have
$$\mu(t,|A|-|B|)\leq\frac{{\rm const}}{t}\|A-B\|_1,\quad t>0.$$
This proves the assertion for bounded $A$ and $B.$

Let now $A,B\in(\mathcal{L}_1+\mathcal{L}_{\infty})(\mathcal{M},\tau).$ Set
$$p_n=E_{|A|}[0,n]\bigwedge E_{|B|}[0,n].$$
Since $A,B$ are $\tau-$measurable, it follows from \cite{Tikhonov} (see also \cite[Theorem 1.1]{DDPS1}) that
$$p_nAp_n\to A,\quad p_nBp_n\to B,\quad |p_nAp_n|\to|A|,\quad|p_nBp_n|\to |B|$$
in measure. It follows from the above that
$$\||p_nAp_n|-|p_nBp_n|\|_{1,\infty}\leq{\rm const}\cdot\|p_n(A-B)p_n\|_1\leq{\rm const}\cdot\|A-B\|_1.$$
Since the quasi-norm in $\mathcal{L}_{1,\infty}(\mathcal{M},\tau)$ has the Fatou property, it follows that
$$\||A|-|B|\|_{1,\infty}\leq{\rm const}\cdot\|A-B\|_1.$$
\end{proof}

The following lemma shows the proper triangle inequality in $\mathcal{L}_{1,\infty}$ for pairwise orthogonal summands. Let
$A_k\in\mathcal{L}_{1,\infty}(\mathcal{M},\tau),$ $k\geq0$. We use the direct sum symbol $\bigoplus_{k=0}^{\infty}A_k$ to denote the operator on $H$ formed with respect
to some arbitrary Hilbert space isomorphism $\bigoplus_{k=0}^{\infty}H\simeq H$.

\begin{lem}\label{disjoint triangle} If $A_k\in\mathcal{L}_{1,\infty}(\mathcal{M},\tau),$ $k\geq0,$ then
$$\|\bigoplus_{k=0}^{\infty}A_k\|_{1,\infty}\leq\sum_{k=0}^{\infty}\|A_k\|_{1,\infty}.$$
\end{lem}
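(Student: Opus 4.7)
The plan is to reduce the statement to an additivity property of the distribution function under direct sums, and then convert back via the standard reformulation
$$\|B\|_{1,\infty}=\sup_{s>0}\, s\cdot d_{|B|}(s),$$
which is the distributional form of the quasi-norm defining $\mathcal{L}_{1,\infty}(\mathcal{M},\tau)$. This identity is obtained from the duality $\mu(t,B)\le s \Leftrightarrow d_{|B|}(s)\le t$, itself a direct consequence of right-continuity of $d_{|B|}$ and the definition $\mu(t,B)=\inf\{s\ge 0:d_{|B|}(s)\le t\}$ given in the preliminaries.

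First, I would observe that setting $A:=\bigoplus_{k=0}^{\infty}A_k$ one has $|A|=\bigoplus_{k=0}^{\infty}|A_k|$, so for every $s>0$ the spectral projection factorises as a direct sum of mutually orthogonal projections, $E_{|A|}(s,\infty)=\bigoplus_{k=0}^{\infty}E_{|A_k|}(s,\infty)$. Taking the (normal, semifinite) trace on the ambient algebra $\mathcal{M}\,\bar\otimes\,\ell^\infty(\mathbb{N})$ with trace $\tau\otimes\#$, normal additivity on sums of mutually orthogonal projections yields
$$d_{|A|}(s)=\sum_{k=0}^{\infty}d_{|A_k|}(s),\qquad s>0.$$

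Next, set $c_k:=\|A_k\|_{1,\infty}$ and $C:=\sum_{k=0}^{\infty}c_k$ (the assertion is trivial if $C=\infty$). The reformulation of the quasi-norm gives $s\cdot d_{|A_k|}(s)\le c_k$, i.e.\ $d_{|A_k|}(s)\le c_k/s$ for every $s>0$. Summing over $k$ and using the additivity displayed above,
$$d_{|A|}(s)\le \frac{C}{s},\qquad s>0.$$
Feeding this back into the distributional form of the quasi-norm yields $\|A\|_{1,\infty}=\sup_{s>0}s\cdot d_{|A|}(s)\le C$, which is the desired inequality.

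I do not anticipate any real obstacle beyond choosing the correct ambient algebra and trace so that the direct sum is a $\tau$-measurable operator; once that is in place, the argument is a two-line manipulation of distribution functions. The non-routine information is entirely the sub-additivity of $d_{|\cdot|}$ under orthogonal direct sums, which is why the ordinary quasi-triangle constant $2$ in \eqref{quasi-triangle} improves to $1$ in this setting.
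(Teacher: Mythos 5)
Your proof is correct and is essentially the same as the paper's: both exploit the exact additivity of the distribution function under orthogonal direct sums and the pointwise bound $d_{|A_k|}(s)\leq\|A_k\|_{1,\infty}/s$, summing to get $d_{|A|}(s)\leq C/s$ and then reading off the quasi-norm (the paper phrases the last step via $\mu(\bigoplus A_k)\leq C\,x$ with $x(t)=1/t$, you via the identity $\|B\|_{1,\infty}=\sup_{s>0}s\,d_{|B|}(s)$, which is the same thing).
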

\begin{proof} Set $x(t)=1/t,$ $t>0.$ For simplicity of notations, denote $\|A_k\|_{1,\infty}$ by $\alpha_k.$ We have $\mu(t,A_k)\leq \alpha_k/t,$ $t>0.$ Using the notation $d_{\alpha_kx}(t)$ for the classical distribution, it is immediate that
$$d_{|\bigoplus_{k=0}^{\infty}A_k|}(t)=\sum_{k=0}^{\infty}d_{|A_k|}(t)\leq\sum_{k=0}^{\infty}d_{\alpha_kx}(t)=\sum_{k=0}^{\infty}\frac{\alpha_k}{t}=\frac{\sum_{k=0}^{\infty}\alpha_k}{t}=d_{(\sum_{k=0}^{\infty}\alpha_k)x}(t).$$
Hence,
$$\mu(\bigoplus_{k=0}^{\infty}A_k)\leq(\sum_{k=0}^{\infty}\alpha_k)x$$
or, equivalently,
$$\|\bigoplus_{k=0}^{\infty}A_k\|_{1,\infty}\leq(\sum_{k=0}^{\infty}\alpha_k)\|x\|_{1,\infty}=\sum_{k=0}^{\infty}\|A_k\|_{1,\infty}.$$
\end{proof}

The following lemma combines well-known facts from \cite{Tak} and less known facts from \cite{Dykema}.

\begin{lem}\label{factor lemma} For every semifinite von Neumann algebra $(\mathcal{M},\tau),$ there exist semifinite factors $(\mathcal{M}_k,\tau_k),$ $k\geq0,$ and a trace preserving $^*-$monomorphism of $(\mathcal{M},\tau)$ into $(\bigoplus_{k\geq0}\mathcal{M}_k,\bigoplus_{k\geq0}\tau_k).$
\end{lem}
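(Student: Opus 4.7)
The plan is to combine the central disintegration of a semifinite von Neumann algebra with a discretisation-plus-amplification scheme that replaces an integral over a continuous centre by a countable direct sum of factors.

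First, I would invoke the central disintegration theorem \cite{Tak} to write $(\mathcal{M},\tau)$ as a direct integral $\mathcal{M}\cong\int_X^{\oplus}\mathcal{M}_x\,d\mu(x)$ with $Z(\mathcal{M})\cong L^{\infty}(X,\mu)$ over a standard measure space $(X,\mu)$ (legitimate here since $H$ is separable), each fibre $\mathcal{M}_x$ a semifinite factor, and $\tau(a)=\int_X\tau_x(a(x))\,d\mu(x)$ for a faithful normal semifinite trace $\tau_x$ on $\mathcal{M}_x$ ($\mu$-a.e.\ $x$). Then I would split $\mu=\mu_a+\mu_d$ into atomic and diffuse parts: the atomic piece contributes a genuine direct sum $\bigoplus_k z_k\mathcal{M}$ of factors indexed by the minimal central projections $z_k$, with the identity already providing a trace-preserving embedding. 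Only the diffuse part $\mathcal{M}_d=\int_X^{\oplus}\mathcal{M}_x\,d\mu_d(x)$ requires work.

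For the diffuse part I would use a refining sequence of finite measurable partitions $\mathcal{P}_1\subseteq\mathcal{P}_2\subseteq\cdots$ of $X$ whose mesh tends to $0$, and for each $n$ sample $a\in\mathcal{M}_d$ at representatives $x_P\in P$ to obtain a $\ast$-homomorphism $\pi_n:\mathcal{M}_d\to\bigoplus_{P\in\mathcal{P}_n}\mathcal{M}_{x_P}$. When the target carries the rescaled trace $\bigoplus_P\mu_d(P)\tau_{x_P}$, the map $\pi_n$ is trace-preserving on the subalgebra of step functions subordinate to $\mathcal{P}_n$ and is approximately trace-preserving in general, with error tending to $0$ as the mesh shrinks. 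I would then invoke the construction from \cite{Dykema} to splice the $\pi_n$ into a single trace-preserving $\ast$-monomorphism into a countable direct sum of amplified fibre factors, the amplifications being chosen so that the rescaling weights $\mu_d(P)$ are absorbed into genuine semifinite factor traces. Assembling this with the atomic piece yields the required embedding.

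The main obstacle will be this last splicing step: passing from a family of only approximately trace-preserving maps to a single exact trace-preserving $\ast$-monomorphism into a true direct sum of factors. The delicate combinatorial coupling of partitions, representatives and amplifications needed for this is precisely the less-known technical content of \cite{Dykema} to which the lemma alludes.
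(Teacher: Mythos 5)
Your central-disintegration plan has a genuine gap at two places, both in the handling of the diffuse part. First, the sampling maps $\pi_n$ are not well-defined: an element of a direct integral $\int_X^{\oplus}\mathcal{M}_x\,d\mu_d(x)$ is an equivalence class of measurable fields modulo $\mu_d$-null sets, so ``evaluate at $x_P$'' makes no sense when $\mu_d(\{x_P\})=0$. One can condition onto step fields over $\mathcal{P}_n$, but that is a normal conditional expectation, not a $*$-homomorphism, so it cannot be fed into a limiting argument for monomorphisms. Second, the final ``splicing'' step is where the real difficulty lives, and you leave it entirely to the citation; but \cite{Dykema} contains nothing of the sort. Dykema's Theorem 2.3 and Lemma 2.5 show that certain free products of von Neumann algebras (e.g.\ of a diffuse finite algebra with another diffuse tracial algebra) are II$_1$ factors, with the free-product trace restricting to the original one. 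That is a tool for producing a single exact trace-preserving embedding of a \emph{finite} algebra into one II$_1$ factor; it is not a machine for turning a sequence of approximately trace-preserving maps into one exact $*$-monomorphism. No such machine is known to me, and you do not describe one, so the proof does not close.

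The paper's proof avoids all of this by never touching the direct-integral picture. It uses the type decomposition $\mathcal{M}=\mathcal{M}^1\oplus\mathcal{M}^2\oplus\mathcal{M}^3$ (type I, II$_1$, II$_\infty$; cf.\ \cite[Theorem V.1.19]{Tak}). For the type I part one writes $\mathcal{M}^1=\bigoplus_k\mathcal{A}_k\bar{\otimes}\mathcal{L}(H_k)$ and observes that \emph{any} commutative algebra with a faithful normal semifinite trace on a separable Hilbert space embeds trace-preservingly into $L_\infty(0,\infty)$, hence into the centre of the hyperfinite II$_\infty$ factor $\mathcal{R}\bar{\otimes}\mathcal{L}(H)$. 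This is the step that disposes of the diffuse commutative centre in one stroke, with no approximation at all. For the type II$_1$ part, Dykema's free-product result gives a trace-preserving embedding into a single II$_1$ factor; the type II$_\infty$ part is written as $\bigoplus_k\mathcal{N}_k\bar{\otimes}\mathcal{L}(H)$ with $\mathcal{N}_k$ of type II$_1$ and is reduced to the II$_1$ case. If you want to salvage your approach, the right move is not to discretise the direct integral but to embed the centre (or, better, each type-I summand's abelian tensor factor) trace-preservingly into $L_\infty(0,\infty)$ and use free products for the type II parts, exactly as the paper does.
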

\begin{proof} By Theorem V.1.19 in \cite{Tak}, we have $\mathcal{M}=\mathcal{M}^1\oplus\mathcal{M}^2\oplus\mathcal{M}^3,$ where $\mathcal{M}^1$ is type I, $\mathcal{M}^2$ is type II$_1$ and $\mathcal{M}^3$ is type II$_{\infty}.$

By Theorem V.1.27 in \cite{Tak}, there exist commutative algebras $\mathcal{A}_k,$ $k\geq0,$ and Hilbert spaces $H_k,$ $k\geq0,$ such that
$$\mathcal{M}^1=\bigoplus_{k\geq0}\mathcal{A}_k\bar{\otimes}\mathcal{L}(H_k).$$
Every $\mathcal{A}_k$ admits a trace preserving isomorphic embedding into $L_{\infty}(0,\infty)$ and then to the hyperfinite II$_{\infty}$ factor $\mathcal{R}\bar{\otimes}\mathcal{L}(H).$ Every $\mathcal{L}(H_k)$ admits a trace preserving isomorphic embedding into $\mathcal{L}(H).$ Thus, $\mathcal{M}^1$ admits a trace preserving isomorphic embedding into
$$\bigoplus_{k\geq0}\mathcal{R}\bar{\otimes}\mathcal{L}(H)\bar{\otimes}\mathcal{L}(H).$$
Equivalently, $\mathcal{M}^1$ admits a trace preserving isomorphic embedding into $(\mathcal{R}\bar{\otimes}\mathcal{L}(H))^{\oplus\infty}.$

By Theorem 2.3 and Lemma 2.5 in \cite{Dykema}, the algebra $\mathcal{M}^2$ admits a trace preserving isomorphic embedding into a II$_1$ factor.

By Theorem V.1.40 in \cite{Tak}, there exist type II$_1-$algebras $\mathcal{N}_k,$ $k\geq0,$ such that
$$\mathcal{M}^3=\bigoplus_{k\geq0}\mathcal{N}_k\bar{\otimes}\mathcal{L}(H).$$
Again applying Theorem 2.3 and Lemma 2.5 in \cite{Dykema}, we embed the algebras $\mathcal{N}_k,$ $k\geq0,$ into II$_1$ factors $\mathcal{O}_k,$ $k\geq0.$ Since $\mathcal{O}_k\bar{\otimes}\mathcal{L}(H)$ is a type II$_{\infty}$ factor, the assertion follows for $\mathcal{M}^3$ and, thus, for $\mathcal{M}.$
\end{proof}

\begin{thm}\label{semifinite} If $\mathcal{M}$ is a semifinite von Neumann algebra and if $A,B\in(\mathcal{L}_1+\mathcal{L}_{\infty})(\mathcal{M},\tau)$ are such that $A-B\in\mathcal{L}_1(\mathcal{M},\tau),$ then $|A|-|B|\in\mathcal{L}_{1,\infty}(\mathcal{M},\tau)$ and
$$\||A|-|B|\|_{1,\infty}\leq{\rm const}\cdot\|A-B\|_1.$$
\end{thm}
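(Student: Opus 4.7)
The plan is to reduce the general semifinite case to the factor case (Lemma \ref{final semif}) via the embedding produced in Lemma \ref{factor lemma}, then reassemble the estimate using the direct-sum quasi-triangle inequality of Lemma \ref{disjoint triangle}.

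First I would invoke Lemma \ref{factor lemma} to obtain semifinite factors $(\mathcal{M}_k,\tau_k)$, $k\geq 0$, and a trace-preserving $^*$-monomorphism
\[
\iota:(\mathcal{M},\tau)\hookrightarrow\Bigl(\bigoplus_{k\geq 0}\mathcal{M}_k,\bigoplus_{k\geq 0}\tau_k\Bigr).
\]
Because $\iota$ is trace-preserving and a $^*$-homomorphism, it commutes with bounded Borel functional calculus on self-adjoint elements (in particular with the absolute value and with spectral projections), and the distribution function of any self-adjoint $\tau$-measurable operator $X$ coincides with that of $\iota(X)$. Consequently $\iota$ extends to the $\tau$-measurable operators, sends $(\mathcal{L}_1+\mathcal{L}_\infty)(\mathcal{M},\tau)$ into $(\mathcal{L}_1+\mathcal{L}_\infty)(\bigoplus_k\mathcal{M}_k,\bigoplus_k\tau_k)$, preserves the $\mathcal{L}_1$-norm and the $\mathcal{L}_{1,\infty}$-quasi-norm, and satisfies $\iota(|X|)=|\iota(X)|$. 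Hence it suffices to prove the desired inequality for $\iota(A),\iota(B)$ inside the direct sum.

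Next I would write $\iota(A)=\bigoplus_{k}A_k$ and $\iota(B)=\bigoplus_{k}B_k$ with $A_k,B_k$ affiliated with $\mathcal{M}_k$; then $A_k,B_k\in(\mathcal{L}_1+\mathcal{L}_\infty)(\mathcal{M}_k,\tau_k)$, $A_k-B_k\in\mathcal{L}_1(\mathcal{M}_k,\tau_k)$, and
\[
\sum_{k\geq 0}\|A_k-B_k\|_1=\|\iota(A)-\iota(B)\|_1=\|A-B\|_1<\infty.
\]
Since $\iota(|A|-|B|)=\bigoplus_{k}(|A_k|-|B_k|)$, Lemma \ref{final semif} applied in each factor $(\mathcal{M}_k,\tau_k)$ yields a universal constant $c$ with $\||A_k|-|B_k|\|_{1,\infty}\leq c\,\|A_k-B_k\|_1$ for every $k$.

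Finally, by Lemma \ref{disjoint triangle} and the fact that $\iota$ is an isometry for $\|\cdot\|_{1,\infty}$,
\[
\||A|-|B|\|_{1,\infty}=\Bigl\|\bigoplus_{k\geq 0}(|A_k|-|B_k|)\Bigr\|_{1,\infty}\leq\sum_{k\geq 0}\||A_k|-|B_k|\|_{1,\infty}\leq c\sum_{k\geq 0}\|A_k-B_k\|_1=c\,\|A-B\|_1,
\]
which is exactly the claim. The only potentially delicate step is the first one: verifying that the trace-preserving embedding really does transport the absolute value mapping and both norms faithfully to the direct sum of factors. Everything after that is mechanical assembly: Lemma \ref{final semif} for the per-factor bound and Lemma \ref{disjoint triangle} for the summation, noting that the latter is precisely the reason we needed to establish a proper (not merely quasi-) triangle inequality on pairwise orthogonal summands before attempting this reduction.
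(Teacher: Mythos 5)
Your proposal is correct and follows essentially the same route as the paper: embed into a direct sum of factors via Lemma \ref{factor lemma}, apply the factor case (Lemma \ref{final semif}) componentwise, and reassemble with Lemma \ref{disjoint triangle}. The extra care you take to justify that a trace-preserving $^*$-monomorphism transports absolute values, distribution functions, and both the $\mathcal{L}_1$-norm and the $\mathcal{L}_{1,\infty}$-quasi-norm is a welcome expansion of a step the paper takes for granted.
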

\begin{proof} According to the Lemma \ref{factor lemma}, we can embed $(\mathcal{M},\tau)$ into $(\bigoplus_{k\geq0}\mathcal{M}_k,\bigoplus_{k\geq0}\tau_k),$ where $(\mathcal{M}_k,\tau_k),$ $k\geq0,$ are semifinite factors.

Thus, $A=\bigoplus_{k\geq0}A_k$ and $B=\bigoplus_{k\geq0}B_k,$ where $A_k,B_k\in(\mathcal{L}_1+\mathcal{L}_{\infty})(\mathcal{M}_k,\tau_k),$ $k\geq0.$ By Lemma \ref{disjoint triangle}, we have that
$$\||A|-|B|\|_{1,\infty}=\|\bigoplus_{k\geq0}|A_k|-|B_k|\|_{1,\infty}\leq\sum_{k\geq0}\||A_k|-|B_k|\|_{1,\infty}.$$
Since every $\mathcal{M}_k$ is a factor, it follows from Lemma \ref{final semif} that
$$\||A_k|-|B_k|\|_{1,\infty}\leq{\rm const}\cdot\|A_k-B_k\|_1.$$
Therefore, we have
$$\||A|-|B|\|_{1,\infty}\leq{\rm const}\cdot\sum_{k\geq0}\|A_k-B_k\|_1={\rm const}\cdot\|\bigoplus_{k\geq0}A_k-B_k\|_1={\rm const}\cdot\|A-B\|_1.$$
\end{proof}

\section{Commutator estimates}\label{Sect=CommutatorEstimates}

The proof of the following consequence is essentially the same as the implication (i) $\Rightarrow$ (ii) of \cite[Theorem 2.2]{DDPS1}. For completeness and the fact that \cite[Theorem 2.2]{DDPS1} is not directly applicable since we are dealing with estimates between different spaces (and one of them only has a quasi-norm) we have included the proof.

\begin{thm}\label{Thm=WkCommutatorEstimate} If $A,B\in(\mathcal{L}_1+\mathcal{L}_{\infty})(\mathcal{M},\tau)$ are self-adjoint operators such that $[A,B]\in\mathcal{L}_1(\mathcal{M},\tau),$ then
$$\|[|A|,B]\|_{1,\infty}\leq{\rm const}\cdot\|[A,B]\|_1.$$
\end{thm}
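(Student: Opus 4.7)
The strategy I would pursue is the classical passage from a Lipschitz estimate for the absolute value mapping to the corresponding commutator estimate, following the implication (i) $\Rightarrow$ (ii) of \cite[Theorem 2.2]{DDPS1}. The Lipschitz input is supplied by Theorem \ref{semifinite}, and the transfer to the commutator exploits the Fatou property of $\mathcal{L}_{1,\infty}(\mathcal{M},\tau)$ to absorb the loss of control in passing from a difference quotient to its derivative (which is precisely why one could not hope to argue inside a norm ideal once only the quasi-norm $\|\cdot\|_{1,\infty}$ is available).

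First I would treat the case $A, B \in \mathcal{M}$. Set $U_t := e^{itB}$, a norm-continuous one-parameter unitary group in $\mathcal{M}$, and $A_t := U_t A U_t^*$. Norm differentiability of $t \mapsto A_t$ combined with Bochner integration in $\mathcal{L}_1(\mathcal{M},\tau)$ yields
$$A_t - A = i \int_0^t U_s [B, A] U_s^* \, ds, \qquad \|A_t - A\|_1 \leq |t| \cdot \|[A, B]\|_1,$$
where the integral is justified because $[A,B] \in \mathcal{L}_1$ and $X \mapsto U_s X U_s^*$ is a norm continuous family of isometries of $\mathcal{L}_1(\mathcal{M},\tau)$. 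Theorem \ref{semifinite} together with the covariance identity $|A_t| = U_t |A| U_t^*$ then gives
$$\left\|\frac{U_t |A| U_t^* - |A|}{t}\right\|_{1,\infty} \leq {\rm const} \cdot \|[A, B]\|_1$$
uniformly in $t \neq 0$. As $t \to 0$ the difference quotient converges in operator norm, and therefore in measure, to $i[B,|A|]$, since $|A|$ and $B$ are bounded. The Fatou property of $(\mathcal{L}_{1,\infty}(\mathcal{M},\tau), \|\cdot\|_{1,\infty})$ then yields the commutator estimate in the bounded setting.

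For the general case $A, B \in (\mathcal{L}_1 + \mathcal{L}_\infty)(\mathcal{M},\tau)$, I would approximate by spectral truncation in the spirit of the proof of Lemma \ref{final semif}: set $p_n := E_{|A|}[0,n] \wedge E_{|B|}[0,n] \uparrow 1$, $A_n := p_n A p_n$ and $B_n := p_n B p_n$, apply the bounded case to $(A_n, B_n)$, and then let $n \to \infty$ using measure convergence together with a final invocation of the Fatou property. The main technical obstacle is this truncation step: $[A_n, B_n]$ differs from $p_n [A, B] p_n$ by cross terms $p_n A (1-p_n) B p_n - p_n B (1-p_n) A p_n$, whose negligibility in $\|\cdot\|_1$ needs to be verified; likewise, convergence of $[|A_n|, B_n]$ to $[|A|, B]$ in measure requires continuity of the absolute value under strong convergence of spectral truncations, where one appeals to \cite[Theorem 1.1]{DDPS1} (cited already in the proof of Lemma \ref{final semif}). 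The Bochner integration in Step~1 and the two Fatou passages are otherwise routine.
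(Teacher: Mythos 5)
The bounded case is essentially the paper's argument, and your Duhamel-type integral formula
$$A_t - A = i\int_0^t U_s\,[B,A]\,U_s^*\,ds$$
is in fact slightly cleaner than the paper's route, which instead expands $[e^{i\varepsilon B},A]=\sum_{k\geq1}\frac{(i\varepsilon)^k}{k!}[B^k,A]$ term by term and arrives at the bound $\|[e^{i\varepsilon B},A]\|_1\leq \varepsilon e^{\varepsilon\|B\|_\infty}\|[A,B]\|_1$, absorbing the exponential factor in the limit $\varepsilon\to0$. Both versions of that step are correct; your integral bound avoids the spurious exponential.

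There is, however, a genuine gap in your treatment of the general case $B\in(\mathcal{L}_1+\mathcal{L}_\infty)(\mathcal{M},\tau)$. You truncate with $p_n=E_{|A|}[0,n]\wedge E_{|B|}[0,n]$, in analogy with Lemma \ref{final semif}, and you correctly identify the problem: $[p_nAp_n,p_nBp_n]$ differs from $p_n[A,B]p_n$ by the cross terms $p_nA(1-p_n)Bp_n - p_nB(1-p_n)Ap_n$. But you then defer this as a point "whose negligibility needs to be verified," treating it as routine. It is not: the hypothesis $[A,B]\in\mathcal{L}_1$ gives no control on $A(1-p_n)B$ or $B(1-p_n)A$ individually, and with your choice of $p_n$ it is not even clear that $[p_nAp_n,p_nBp_n]$ lies in $\mathcal{L}_1$, so the bounded case cannot be invoked with a useful right-hand side. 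The paper sidesteps this entirely by taking $p_n = E_{|B|}[0,n]$: this projection commutes with $B$, whence $[p_nAp_n,p_nBp_n]=p_n[A,B]p_n$ \emph{exactly}, the cross terms vanish, $\|[p_nAp_n,p_nBp_n]\|_1\leq\|[A,B]\|_1$, and the Fatou argument closes without any estimate on individual products of $A$ and $B$. Your approximation scheme, as stated, does not close the argument; replacing your truncation by $E_{|B|}[0,n]$ (and noting that $|p_nAp_n|\to|A|$ and $p_nBp_n\to B$ in measure still hold, by \cite{Tikhonov}) is the missing ingredient.
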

\begin{proof} Suppose first that $B$ is bounded. Setting $C=e^{i\varepsilon B}Ae^{-i\varepsilon B},$ we have $|C|=e^{i\varepsilon B}|A|e^{-i\varepsilon B}.$ We infer from Theorem \ref{semifinite} that
$$\|[e^{i\varepsilon B},|A|]\|_{1,\infty}=\||C|-|A|\|_{1,\infty}\leq{\rm const}\cdot\|C-A\|_1={\rm const}\cdot\|[e^{i\varepsilon B},A]\|_1.$$
However,
$$[e^{i\varepsilon B},A]=\sum_{k=1}^{\infty}\frac{(i\varepsilon)^k}{k!}[B^k,A]$$
where the series on the right hand side converges in $\mathcal{L}_1(\mathcal{M},\tau)$. Indeed,
$$[B^k,A]=\sum_{m=0}^{k-1}B^m[B,A]B^{k-1-m},\quad k\geq 1$$
and therefore,
$$\|[e^{i\varepsilon B},A]\|_1=\|\sum_{k=1}^{\infty}\sum_{m=0}^{k-1}\frac{(i\varepsilon)^k}{k!}B^m[B,A]B^{k-1-m}\|_1\leq\sum_{k=1}^{\infty}\sum_{m=0}^{k-1}\frac{\varepsilon^k}{k!}\|B^m[B,A]B^{k-1-m}\|_1\leq$$
$$\leq\sum_{k=1}^{\infty}\sum_{m=0}^{k-1}\frac{\varepsilon^k}{k!}\|B\|_{\infty}^{k-1}\|[B,A]\|_1=\Big(\sum_{k=1}^{\infty}\frac{k\varepsilon^k}{k!}\|B\|_{\infty}^{k-1}\Big)\|[A,B]\|_1=\varepsilon e^{\varepsilon\|B\|_{\infty}}\|[A,B]\|_1.$$
Combining preceding estimates, we infer that
$$\|\varepsilon^{-1}[e^{i\varepsilon B},|A|]\|_{1,\infty}\leq {\rm const}\cdot e^{\varepsilon\|B\|_{\infty}}\|[A,B]\|_1.$$
It follows from the Spectral Theorem and boundedness of $B$ that
$$\varepsilon^{-1}(e^{i\varepsilon B}-1)\to iB$$
uniformly and, therefore,
$$\varepsilon^{-1}[e^{i\varepsilon B},|A|]=[\varepsilon^{-1}(e^{i\varepsilon B}-1),|A|]\to i[B,|A|]$$
in the norm of the space $(\mathcal{L}_1+\mathcal{L}_{\infty})(\mathcal{M},\tau)$ and therefore
in measure (see also \cite{DDPS1}). Since the quasi-norm in $\mathcal{L}_{1,\infty}$ has the Fatou property, it follows that
$$\|[|A|,B]\|_{1,\infty}\leq {\rm const}\cdot\|[A,B]\|_1.$$
This proves the assertion for the case of bounded $B.$

Consider now the general case of an arbitrary $B\in(\mathcal{L}_1+\mathcal{L}_{\infty})(\mathcal{M},\tau).$ Set $p_n=E_{|B|}[0,n].$ We have that $p_nAp_n\to A$ and $p_nBp_n\to B$ in measure. Since $p_n$ commutes with $B,$ it follows that
$$[p_nAp_n,p_nBp_n]=p_n[A,B]p_n.$$
It follows from \cite{Tikhonov} (see also \cite[Theorem 1.1]{DDPS1}) that $|p_nAp_n|\to|A|$ in measure. Thus,
$$[|p_nAp_n|,p_nBp_n]\to[|A|,B]$$
in measure. It is proved in the previous paragraph that
$$\|[|p_nAp_n|,p_nBp_n]\|_{1,\infty}\leq{\rm const}\|[p_nAp_n,p_nBp_n]\|_1={\rm const}\cdot\|p_n[A,B]p_n\|_1\leq{\rm const}\cdot\|[A,B]\|_1.$$
Since the quasi-norm in $\mathcal{L}_{1,\infty}$ has the Fatou property, it follows that
$$\|[|A|,B]\|_{1,\infty}\leq {\rm const}\cdot\|[A,B]\|_1.$$
\end{proof}

\begin{rmk}\label{Rmk=WeakInterpolation}
The result of Theorem \ref{Daviesthm}  may be obtained from Theorem \ref{Thm=WkCommutatorEstimate} as follows. Firstly, observe that we can interpolate between the weak $\mathcal{L}_1$-space, $\mathcal{ L}_{1,\infty}$  and $\mathcal{L}_{2}$ using weak type interpolation (see e.g. \cite{Dirksen} and references therein). This immediately implies the estimates for Schatten $p$-norms, $1<p<2$ analogous to that of Theorem \ref{Thm=WkCommutatorEstimate}, with the case $2<p<\infty$ following by duality. The result of Theorem \ref{abs lip}  follows now from  \cite[Theorem 2.2]{DDPS1}.
\end{rmk}

\section{Final comments}

Davies introduced the class of functions representable in the form
$$f(t)=\int_{\mathbb{R}}|t-s|d\nu_f(s),$$
where $\nu_f$ is a signed measure with finite support. He proved that
$$\|f(A)-f(B)\|_p\leq c_{p,f}\|A-B\|_p,\quad 1<p<\infty.$$
Though we cannot fully extend this result to $p=1,$ the following is possible.

Define distorted variation $DV(\nu)$ as follows
$$DV(\nu)=\sup\{\inf_{\pi}\sum_{k\geq0}2^{\pi(k)}|\nu(A_k)|:\ A_m\cap A_n=\varnothing\mbox{ for all }m\neq n,\ \cup_{k\geq0}A_k=\mathbb{R}\}.$$
Here, every $A_n,$ $n\geq0,$ is an interval (or a semi-axis) and the infimum is taken over all permutations $\pi$ of $\mathbb{Z}_+$.

\begin{lem}\label{approx lemma} For every finitely supported measure $\nu$ with $DV(\nu)<\infty,$ there exists a sequence $\nu_m,$ $m\geq 1,$ of discrete measures such that
$$\int_{\mathbb{R}}|t-s|d\nu_m(s)\to\int_{\mathbb{R}}|t-s|d\nu(s)$$
uniformly and $DV(\nu_m)\leq DV(\nu)$ for all $m\ge 1$.
\end{lem}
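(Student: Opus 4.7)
The plan is to build $\nu_m$ by discretizing $\nu$ on a finer and finer partition of its (compact) support. Since $s \mapsto |t-s|$ is $1$-Lipschitz, replacing the restriction of $\nu$ to a short interval by a point mass of equal total mass at any representative point perturbs $\int |t-s|\, d\nu(s)$ only by (diameter) times (local total variation), so the uniform convergence will come essentially for free. The nontrivial point is to arrange the construction so that $DV(\nu_m) \leq DV(\nu)$ exactly (not merely up to a constant), and for this I will exploit the fact that $\nu_m$ is concentrated on a fixed discrete grid, so any interval partition of $\mathbb{R}$ can be ``snapped'' to that grid without altering the masses assigned to each cell.

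Concretely, fix a bounded interval $[a,b]$ containing $\mathrm{supp}(\nu)$. For each $m\geq 1$, partition $[a,b]$ into $m$ consecutive subintervals $I_1^{(m)}, \ldots, I_m^{(m)}$ of length $(b-a)/m$ with endpoints $a=x_0<x_1<\cdots<x_m=b$, and choose sample points $s_k^{(m)} \in I_k^{(m)}$. Set
$$\nu_m := \sum_{k=1}^m \nu(I_k^{(m)})\, \delta_{s_k^{(m)}}.$$
Uniform convergence is then immediate from the bound
$$\sup_{t\in\mathbb{R}} \left| \int |t-s|\, d(\nu_m-\nu)(s)\right| \leq \sum_{k=1}^m \int_{I_k^{(m)}} |s-s_k^{(m)}|\, d|\nu|(s) \leq \tfrac{b-a}{m}\, |\nu|([a,b]) \to 0.$$

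For the distorted variation inequality, fix any partition $\{A_j\}_{j\geq 0}$ of $\mathbb{R}$ into intervals. The plan is to construct an auxiliary interval partition $\{\tilde A_j\}$ of $\mathbb{R}$ satisfying $\nu_m(A_j)=\nu(\tilde A_j)$ for every $j$. Each finite breakpoint of $\{A_j\}$ that falls inside some $I_k^{(m)}$ is ``snapped'' to one of the endpoints $x_{k-1}$, $x_k$, chosen so that the representative $s_k^{(m)}$ remains on the same side of the breakpoint as before. By design each $I_k^{(m)}$ then lies entirely inside the unique $\tilde A_j$ coming from the $A_j$ that contained $s_k^{(m)}$, and hence
$$\nu(\tilde A_j) = \sum_{k\,:\, s_k^{(m)} \in A_j} \nu(I_k^{(m)}) = \nu_m(A_j).$$
Therefore $\sum_{j\geq 0} 2^{\pi(j)}|\nu_m(A_j)| = \sum_{j\geq 0} 2^{\pi(j)}|\nu(\tilde A_j)|$ for every permutation $\pi$, and taking the infimum in $\pi$ together with the definition of $DV(\nu)$ gives $\inf_\pi \sum_j 2^{\pi(j)}|\nu_m(A_j)| \leq DV(\nu)$; the supremum over $\{A_j\}$ then finishes the proof.

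The one step that actually requires care is the snap construction: I need to verify that when two adjacent breakpoints of $\{A_j\}$ happen to lie in the same $I_k^{(m)}$, the snapping rule never reverses their order, so $\{\tilde A_j\}$ really is a partition into intervals. This reduces to a short case analysis according to the position of $s_k^{(m)}$ relative to the two breakpoints (both to the right, both to the left, or in between), and in each case the left breakpoint is sent to $x_{k-1}$ or $x_k$ and the right one to $x_k$ (or both to the same endpoint), yielding a possibly degenerate but well-ordered pair. Apart from this verification the argument is routine.
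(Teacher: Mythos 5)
Your proof is correct and takes essentially the same route as the paper: discretize $\nu$ on a uniform grid, bound the uniform error by the $1$-Lipschitz property of $s\mapsto|t-s|$, and establish $DV(\nu_m)\leq DV(\nu)$ by snapping any interval partition of $\mathbb{R}$ to the grid so that $\nu_m(A_j)=\nu(\tilde A_j)$. The paper fixes the left endpoints $k/m$ as sample points and compresses the snapping step into the one-line remark that every partition of the finite grid $\{0,1/m,\dots,(m-1)/m\}$ extends to an interval partition of $[0,1)$, but the underlying mechanism is identical to yours.
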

\begin{proof} Assume, for simplicity of notations, that $\nu$ is supported on $[0,1)$ and that $|\nu|([0,1))=1.$ Define a measure $\nu_m$ by setting
$$\nu_m=\sum_{k=0}^{m-1}\nu([\frac{k}{m},\frac{k+1}{m}))\delta_{\{\frac{k}{m}\}}.$$
It is immediate that $DV(\nu_m)\leq DV(\nu)$ (because every partition of the finite set $\{0, 1/m, \cdots, (m-1)/m\}$ extends to a partition of $[0,1)$).

Fix $t\in \mathbb{R}$ and for a given $m\in\mathbb{N},$ define the function $g_m$ on $[0,1)$ by setting $g_m(s)=|k/m-t|$ for all $s\in[k/m,(k+1)/m),$ $0\leq k<m.$ Since $g_m$ is a step function with steps at $\{0, 1/m, \cdots, (m-1)/m\}$, it follows that
$$\int_{\mathbb{R}}g_m(s)d\nu_m(s)=\int_{\mathbb{R}}g_m(s)d\nu(s).$$
It is clear that
$$\Big|\int_{\mathbb{R}}|t-s|d\nu(s)-\int_{\mathbb{R}}g_m(s)d\nu(s)\Big|\leq\frac1m\cdot|\nu|([0,1))$$
and
$$\Big|\int_{\mathbb{R}}|t-s|d\nu_m(s)-\int_{\mathbb{R}}g_m(s)d\nu_m(s)\Big|\leq\frac1m\cdot|\nu_m|([0,1)).$$
It follows that
$$\Big|\int_{\mathbb{R}}|t-s|d\nu_m(s)-\int_{\mathbb{R}}|t-s|d\nu(s)\Big|\leq\frac{2}{m}.$$
This proves the claim.
\end{proof}

The following lemma is a particular case of  \cite[Lemma 17]{Suk-kv-ban} (proved there for every quasi-Banach space and not just $\mathcal{L}_{1,\infty}$). It serves as a replacement for the triangle inequality.

\begin{lem}\label{infinite quasi-traingle} Let $A_k\in\mathcal{L}_{1,\infty},$ $k\geq0.$ We have
$$\|\sum_{k=0}^{\infty}A_k\|_{1,\infty}\leq{\rm const}\cdot\sum_{k=0}^{\infty}2^k\|A_k\|_{1,\infty}.$$
Here, the convergence of the series in the right hand side guarantees that the series in the left hand side converges in $\mathcal{L}_{1,\infty}.$
\end{lem}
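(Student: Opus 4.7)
The plan is to derive the finite version of the claimed bound by iterating the weak triangle inequality \eqref{quasi-triangle}, and then to pass to the infinite sum using the fact (recorded in Section~2) that $(\mathcal{L}_{1,\infty}, \|\cdot\|_{1,\infty})$ is a complete quasi-Banach ideal.

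First I would split off the first term using \eqref{quasi-triangle}, which gives
$$\Bigl\|\sum_{k=0}^{N} A_k\Bigr\|_{1,\infty} \leq 2\|A_0\|_{1,\infty} + 2\Bigl\|\sum_{k=1}^{N} A_k\Bigr\|_{1,\infty},$$
and by induction on $N$ (applied to each remaining tail in turn) one obtains the finite estimate
$$\Bigl\|\sum_{k=0}^{N} A_k\Bigr\|_{1,\infty} \leq 2\sum_{k=0}^{N} 2^{k}\|A_k\|_{1,\infty}.$$
This already yields the claimed bound for every finite partial sum, with explicit constant $2$ (absorbed into the generic $\mathrm{const}$).

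Next I would apply the same finite estimate to the shifted tail sequence $B_j := A_{N+1+j}$, $0\leq j\leq M-N-1$, obtaining
$$\Bigl\|\sum_{k=N+1}^{M}A_k\Bigr\|_{1,\infty}\leq 2\sum_{j=0}^{M-N-1}2^{j}\|A_{N+1+j}\|_{1,\infty}=2^{-N}\sum_{k=N+1}^{M}2^{k}\|A_k\|_{1,\infty}.$$
Since the series $\sum_{k\geq 0}2^{k}\|A_k\|_{1,\infty}$ is assumed convergent, its tails vanish, so the partial sums $S_N:=\sum_{k=0}^{N}A_k$ form a Cauchy sequence in $\mathcal{L}_{1,\infty}$. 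By completeness this sequence converges in quasi-norm to some $S\in\mathcal{L}_{1,\infty}$; this $S$ is what $\sum_{k=0}^{\infty}A_k$ means. Convergence in quasi-norm implies convergence in measure, so the Fatou property of $\mathcal{L}_{1,\infty}$ (or just lower semi-continuity of the quasi-norm along $S_N$) lets me pass to the limit in the finite bound to conclude
$$\|S\|_{1,\infty}\leq 2\sum_{k=0}^{\infty}2^{k}\|A_k\|_{1,\infty}.$$

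I do not expect any substantive obstacle: the proof is a bookkeeping exercise in the defining quasi-triangle property, and the factor $2^k$ arises simply as the $k$-th power of the quasi-triangle constant $2$ appearing in \eqref{quasi-triangle}. The only small point to verify is that the Cauchy criterion actually applies to the partial sums under the hypothesis on $\sum 2^k\|A_k\|_{1,\infty}$, which is exactly what the shifted-tail estimate above provides.
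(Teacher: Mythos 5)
Your proof is correct. Note that the paper itself does not prove this lemma at all; it simply invokes \cite[Lemma 17]{Suk-kv-ban}, where the statement is established for an arbitrary quasi-Banach space. Your argument---iterating the quasi-triangle inequality \eqref{quasi-triangle}, which produces the factor $2^k$ as the $k$-th power of the quasi-triangle constant, obtaining the finite bound $\|\sum_{k=0}^N A_k\|_{1,\infty}\leq 2\sum_{k=0}^N 2^k\|A_k\|_{1,\infty}$ by induction, using the shifted-tail estimate to get a Cauchy sequence of partial sums, and then passing to the limit via completeness and the Fatou property---is the standard and essentially unique elementary proof of such an inequality, and is almost certainly what the cited reference does in its general setting. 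So your proposal fills in exactly what the paper leaves to the reader. The only fine point, which you handle correctly, is that the limiting step cannot rely on norm continuity (the quasi-norm $\|\cdot\|_{1,\infty}$ is not a norm), so one must either invoke the Fatou property with respect to measure convergence, as you do, or settle for a slightly worse constant by a final application of \eqref{quasi-triangle} to $S = S_N + (S - S_N)$; either route yields the required bound with a universal constant.
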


\begin{thm} If $f$ is in the Davies class and if $DV(\nu_f)<\infty,$ then for any two bounded self-adjoint operators $A$ and $B$, such that $A-B\in \mathcal{L}_1$, we have
$$\|f(A)-f(B)\|_{1,\infty}\leq{\rm const}\cdot DV(\nu_f)\cdot\|A-B\|_1.$$
\end{thm}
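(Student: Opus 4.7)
The plan is to reduce, using Lemma \ref{approx lemma}, to the case where $\nu_f$ is a finite linear combination of Dirac masses, and then to apply Theorem \ref{abs lip} to each atom together with the quasi-triangle inequality of Lemma \ref{infinite quasi-traingle}, reordering the summands so that the $2^k$-weights line up with the infimum over permutations appearing in the definition of $DV(\nu_f)$.

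First I would treat the discrete case $\nu = \sum_{j=0}^{N-1} c_j \delta_{s_j}$. Functional calculus gives
$$f(A) - f(B) = \sum_{j=0}^{N-1} c_j \bigl(|A - s_j I| - |B - s_j I|\bigr),$$
and since $(A - s_j I) - (B - s_j I) = A - B \in \mathcal{L}_1$, Theorem \ref{abs lip} yields the uniform bound
$$\bigl\| |A - s_j I| - |B - s_j I| \bigr\|_{1,\infty} \leq \kappa \|A-B\|_1, \qquad \kappa := 34 + \tfrac{2560e}{\pi}.$$
Let $\sigma$ be the permutation of $\{0,\ldots,N-1\}$ sorting the weights so that $|c_{\sigma(0)}| \geq \cdots \geq |c_{\sigma(N-1)}|$. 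Reindexing the sum by $\sigma$ and padding with zeros, Lemma \ref{infinite quasi-traingle} gives
$$\|f(A) - f(B)\|_{1,\infty} \leq \mathrm{const}\cdot \kappa \cdot \|A-B\|_1 \cdot \sum_{k=0}^{N-1} 2^k |c_{\sigma(k)}|.$$
Choosing in the definition of $DV(\nu)$ the partition of $\mathbb{R}$ into intervals each containing precisely one atom of $\nu$ (with the remaining indices assigned to empty intervals) and the permutation $\pi = \sigma^{-1}$ (extended arbitrarily to $\mathbb{Z}_+$), the right-hand sum is precisely one of the expressions in the $\inf_{\pi}$ defining $DV(\nu)$ for that partition, and is therefore at most $DV(\nu)$.

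To pass to a general $\nu_f$, I would invoke Lemma \ref{approx lemma} to obtain discrete measures $\nu_m$ with $DV(\nu_m) \leq DV(\nu_f)$ such that the associated functions $f_m(t) := \int_\mathbb{R} |t-s|\, d\nu_m(s)$ converge to $f$ uniformly on $\mathbb{R}$. Since $A$ and $B$ are bounded self-adjoint, continuous functional calculus gives $\|f_m(A) - f(A)\|_\infty \to 0$ and similarly for $B$, so $f_m(A) - f_m(B) \to f(A) - f(B)$ in operator norm, in particular in measure. The discrete case supplies a uniform estimate $\|f_m(A) - f_m(B)\|_{1,\infty} \leq \mathrm{const} \cdot \kappa \cdot DV(\nu_f) \cdot \|A-B\|_1$, and the Fatou property of $\mathcal{L}_{1,\infty}$ transports this estimate to the limit, yielding the claimed bound.

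The main (though modest) obstacle I anticipate is the combinatorial matching in the discrete step: one must verify that sorting the summands by decreasing weight before applying Lemma \ref{infinite quasi-traingle} exactly realizes (up to the universal constant) the infimum over permutations built into $DV(\nu)$ at the finest atomic partition, so that the resulting bound is dominated by the supremum over partitions that defines $DV(\nu)$.
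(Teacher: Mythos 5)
Your argument is correct and mirrors the paper's proof almost exactly: reduction to discrete measures via Lemma~\ref{approx lemma} plus the Fatou property of $\mathcal{L}_{1,\infty}$, per-atom application of Theorem~\ref{abs lip}, and summation via Lemma~\ref{infinite quasi-traingle}. One small clarification on the combinatorial step you flag: the reason the sorted sum $\sum_k 2^k |c_{\sigma(k)}|$ is bounded by $DV(\nu)$ is \emph{not} that it is ``one of the expressions in the $\inf_\pi$'' (a particular term of an infimum is in general $\geq$, not $\leq$, that infimum), but rather that by the rearrangement inequality the sorted sum \emph{equals} $\inf_\pi \sum_k 2^{\pi(k)}|\nu(A_k)|$ for the atom-separating partition, and $DV(\nu)$ is the supremum over partitions of exactly such infima --- precisely the verification you rightly anticipate being needed.
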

\begin{proof}   By Lemma \ref{approx lemma}, we may approximate $\nu$ with $DV(\nu)\leq 1$ by the sequence of discrete measures $\nu_m$ with $DV(\nu_m)\leq 1.$ It follows that we can assume without loss of generality that $\nu$ is discrete. Indeed, since $A$ and $B$ are bounded, we find that $f_m(A)\rightarrow f(A)$ and $f_m(B)\rightarrow f(B)$ uniformly, where
$$
f_m(t):= \int _{\mathbb{R}} |t-s|d\nu_m(s),\quad t\in \mathbb{R}.
$$ Using the Fatou property of the $\mathcal{L}_{1,\infty},$ we infer that it is suffices to prove the assertion for discrete measures with finite distorted variation.

If the measure $\nu$ is discrete, then
$$f(t)=\sum_{k=0}^{\infty}\alpha_k|t-t_k|,\quad\sum_{k=0}^{\infty}2^k|\alpha_k|<\infty.$$
We have
$$f(A)-f(B)=\sum_{k=0}^{\infty}\alpha_k\Big(|A-t_k|-|B-t_k|\Big).$$
By Theorem \ref{abs lip}, we have
$$\||A-t_k|-|B-t_k|\|_{1,\infty}\leq{\rm const}\cdot \|A-B\|_1.$$
It follows from Lemma \ref{infinite quasi-traingle} above that
$$\|f(A)-f(B)\|_{1,\infty}\leq \sum_{k=0}^{\infty}2^k|\alpha_k|\|A-B\|_1.$$
\end{proof}

\end{document}